\tikzset{snake it/.style={decorate, decoration=snake}}
\theoremstyle{plain}
\newtheorem{thm}{Theorem}[section]
\newtheorem{cor}[thm]{Corollary}
\newtheorem{lem}[thm]{Lemma}
\newtheorem{prop}[thm]{Proposition}
\newtheorem{question}[thm]{Question}
\theoremstyle{definition}
\theoremstyle{remark}
\newtheorem{rmk}[thm]{Remark}
\newcommand{\BC}{{\mathbb{C}}}
\newcommand{\BF}{{\mathbb{F}}}
\newcommand{\BP}{{\mathbb{P}}}
\newcommand{\CE}{{\mathcal E}}
\newcommand{\CF}{{\mathcal F}}
\newcommand{\CO}{{\mathcal O}}
\newcommand{\Sym}{{\textnormal{Sym}}}
\newcommand{\ch}{{\mathrm{ch}}}
\DeclareFontFamily{OT1}{rsfs}{}
\DeclareFontShape{OT1}{rsfs}{n}{it}{<-> rsfs10}{}
\DeclareMathAlphabet{\curly}{OT1}{rsfs}{n}{it}
\newcommand\Hom{\operatorname{Hom}}
\begin{document}
\title[Cohomological $\chi$-dependence of ring structure]{Cohomological $\chi$-dependence of ring structure for the moduli of one-dimensional sheaves on $\mathbb{P}^2$}
\date{\today}

\newcommand\blfootnote[1]{%
  \begingroup
  \renewcommand\thefootnote{}\footnote{#1}%
  \addtocounter{footnote}{-1}%
  \endgroup
}

\author[W. Lim]{Woonam Lim}
\address{Utrecht University, Department of Mathematics}
\email{w.lim@uu.nl}

\author[M. Moreira]{Miguel Moreira}
\address{Massachusetts Institute of Technology, Department of Mathematics}
\email{miguel@mit.edu}

\author[W. Pi]{Weite Pi}
\address{Yale University, Department of Mathematics}
\email{weite.pi@yale.edu}

\keywords{Cohomology and Chow rings, moduli spaces of sheaves.}

\begin{abstract}
     We prove that the cohomology rings of the moduli space $M_{d,\chi}$ of one-dimensional sheaves on the projective plane are not isomorphic  for general different choices of the Euler characteristics. This stands in contrast to the $\chi$-independence of the Betti numbers of these moduli spaces. As a corollary, we deduce that $M_{d,\chi}$ are topologically different unless they are related by obvious symmetries, strengthening a previous result of Woolf distinguishing them as algebraic varieties.

\end{abstract}

\maketitle

\setcounter{tocdepth}{1} 

\tableofcontents
\setcounter{section}{0}

\section{Introduction}



We work over the complex numbers $\mathbb{C}$. 

\subsection{Moduli of sheaves.}
\label{section0.1}
Fix two integers $d$ and $\chi$ with $d\geq 1$. We consider the moduli space $M_{d,\chi}$ of semistable one-dimensional sheaves $\mathcal{F}$ on $\mathbb{P}^2$ with
\[
[\mathrm{supp}(\mathcal{F})]= d\cdot H \in H_2(\mathbb{P}^2, \mathbb{Z}), \quad \chi(\mathcal{F}) = \chi.
\]
Here $H$ is the class of a line, \,$\mathrm{supp}(\mathcal{F})$ denotes the Fitting support, and the stability condition is with respect to the slope
\[
\mu(\mathcal{E}) = \frac{\chi(\mathcal{E})}{c_1(\mathcal{E})\cdot H} \in \mathbb{Q}.
\]
\quad Le Potier \cite{LeP} first studied this moduli space and showed that $M_{d,\chi}$ is an irreducible projective variety of dimension $d^2+1$, nonsingular at all stable points. In particular, when $d$ and $\chi$ are coprime, semistability coincides with stability and $M_{d,\chi}$ is nonsingular. Geometry and topology of the moduli space $M_{d,\chi}$ have been intensively studied from various perspectives; see \cite[Introduction]{PS} for a brief overview. In this paper, we are interested in the cohomology rings $H^*(M_{d,\chi}, \mathbb{C})$ for a fixed $d$ and \textit{different} Euler characteristics $\chi$ coprime to $d$. Under the coprime assumption, the cycle class map
\[
\mathrm{cl}: \mathrm{CH}^*(M_{d,\chi}) \longrightarrow H^{2*}(M_{d,\chi}, \mathbb{Z})
\]
is an isomorphism by \cite[Theorem 2]{Integral}. Hence in this case we use $A^*(-)$ to denote the even cohomology $H^{2*}(-, \mathbb{C})$, or equivalently the Chow ring $\mathrm{CH}^*(-, \mathbb{C})$ with complex coefficients.

\smallskip

The moduli spaces $M_{d,\chi}$ admit two types of symmetries:

\begin{enumerate}
    \item[(a)] The first type of symmetry is given by the isomorphism
    \[
    \psi_1 : M_{d,\chi} \xrightarrow{\sim} M_{d,\chi+d}, \;\; \mathcal{F} \mapsto \mathcal{F} \otimes \mathcal{O}_{\mathbb{P}^2}(1).
    \]

    \item[(b)] The second type of symmetry \cite[Theorem 13]{Mai10} is given by the duality isomorphism
    \[
    \psi_2 : M_{d,\chi} \xrightarrow{\sim} M_{d,-\chi}, \;\; \mathcal{F} \mapsto \CE \kern -1.5 pt \mathit{xt}^1(\CF, \omega_{\BP^2}).
    \]
\end{enumerate}

Thus given two Euler characteristics $\chi, \chi'$ satisfying $\chi \equiv \pm \chi' \mod d$, there is a natural isomorphism $M_{d,\chi} \simeq M_{d, \chi'}$. The following theorem states that this is essentially the only case.

\begin{thm}[{\cite[Theorem 8.1]{Woolf}}]
\label{woolf}
For $d\geq 3$, there is an isomorphism of algebraic varieties
\[
M_{d,\chi} \simeq M_{d,\chi'}
\]
if and only if $\chi \equiv \pm \chi' \mod d$.
\end{thm}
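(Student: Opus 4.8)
The ``if'' direction is exactly what the isomorphisms $\psi_1$ and $\psi_2$ above provide, so only the converse requires work: one must show that an isomorphism of algebraic varieties $M_{d,\chi}\simeq M_{d,\chi'}$ forces $\chi\equiv\pm\chi'\bmod d$. The plan is to produce a discrete invariant of the birational geometry of $M_{d,\chi}$ that depends on $\chi$ only through its class in $\mathbb{Z}/d\mathbb{Z}$ up to sign, and that is fine enough to separate the remaining cases. Since $\gcd(d,\chi)=1$, the space $M_{d,\chi}$ is smooth projective of dimension $d^2+1$, its Picard group is free of rank $2$, and (by the theory of moduli of sheaves on $\mathbb{P}^2$, e.g.\ via Bridgeland stability) it is a Mori dream space; hence the nef cone $\mathrm{Nef}(M_{d,\chi})$ and the pseudo-effective cone $\overline{\mathrm{Eff}}(M_{d,\chi})$ are rational polyhedral, each with two extremal rays. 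Any isomorphism $f\colon M_{d,\chi}\xrightarrow{\sim}M_{d,\chi'}$ induces an isomorphism $f^*\colon \mathrm{Pic}(M_{d,\chi'})\xrightarrow{\sim}\mathrm{Pic}(M_{d,\chi})$ of lattices carrying nef cone to nef cone, pseudo-effective cone to pseudo-effective cone, and $K_{M_{d,\chi'}}$ to $K_{M_{d,\chi}}$. Therefore the configuration inside $\mathbb{P}\big(N^1(M_{d,\chi})_{\mathbb{R}}\big)\cong\mathbb{P}^1$ of the (at most four) extremal rays of $\overline{\mathrm{Eff}}$ and $\mathrm{Nef}$ together with the ray spanned by $K_{M_{d,\chi}}$, taken up to the action of $\mathrm{PGL}_2(\mathbb{Z})$, is an isomorphism invariant of $M_{d,\chi}$.

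The core step is to compute this configuration explicitly. After fixing a canonical basis of $N^1(M_{d,\chi})_{\mathbb{R}}$ — for instance the two tautological divisor classes built from the Künneth components of the second Chern class of a normalized universal sheaf on $M_{d,\chi}\times\mathbb{P}^2$, or Le Potier's determinantal classes — one expresses the data above as explicit functions of $d$ and $\chi$. One extremal ray of $\mathrm{Nef}(M_{d,\chi})$ is spanned by the pullback of the hyperplane class under the support morphism $M_{d,\chi}\to |dH|\cong\mathbb{P}^{\binom{d+2}{2}-1}$, whose generic fibre is a Jacobian of a smooth degree-$d$ plane curve, so this class is nef but not big; the remaining boundary behaviour, and in particular the nontrivial extremal ray of $\overline{\mathrm{Eff}}(M_{d,\chi})$, is controlled by a Brill--Noether (``theta'') divisor $\Theta_m$, the locus of sheaves $\mathcal{F}$ for which $\dim\Hom(\mathcal{O}_{\mathbb{P}^2}(-m),\mathcal{F})$ jumps above its generic value; the relevant twist $m$ and the slope of $[\Theta_m]$ in the chosen basis are dictated by the position of the destabilizing Bridgeland wall for the associated short exact sequence, which depends on $d$ and on $\chi\bmod d$. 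Running this wall-crossing analysis yields closed formulae for the extremal rays of $\mathrm{Nef}(M_{d,\chi})$ and $\overline{\mathrm{Eff}}(M_{d,\chi})$ and for $K_{M_{d,\chi}}$ in terms of $d$ and $\chi$.

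Finally one feeds these formulae into the invariant of the first step: the $\mathrm{PGL}_2(\mathbb{Z})$-equivalence class of the marked points on $\mathbb{P}^1$ — captured by cross-ratios of the extremal rays and of the canonical class, refined by the actual rational slopes when cross-ratios alone are not separating — is computed and shown to recover $\chi\bmod d$ up to sign. Since $\psi_1$ and $\psi_2$ move any $\chi$ with $\gcd(d,\chi)=1$ into the range $1\le\chi\le d/2$, it then suffices to check that distinct coprime residues there give inequivalent configurations, a finite arithmetic verification in which the hypothesis $d\ge 3$ and the coprimality are used. The main obstacle is the second step: determining $\overline{\mathrm{Eff}}(M_{d,\chi})$ (and $\mathrm{Nef}(M_{d,\chi})$) explicitly as a function of $\chi$. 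This requires a careful Bridgeland/Gieseker wall analysis on $\mathbb{P}^2$, the identification of the corresponding extremal contractions, and — most delicately — verifying that the $\chi$-dependence of the boundary of the pseudo-effective cone does not disappear under the $\mathrm{GL}_2(\mathbb{Z})$ change-of-basis ambiguity, i.e.\ that the cone configurations attached to inequivalent values of $\chi$ are genuinely non-isomorphic.
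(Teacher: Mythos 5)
First, a point of orientation: the paper does not prove this statement at all --- it is quoted from Woolf \cite{Woolf}, and the title of that reference (``Nef and effective cones on the moduli space of torsion sheaves on the projective plane'') already tells you that your strategy is the one actually used there: distinguish the $M_{d,\chi}$ by the configuration of $\mathrm{Nef}$, $\overline{\mathrm{Eff}}$ and the canonical class inside the rank-two N\'eron--Severi lattice, with the cones computed via a Bridgeland/Brill--Noether wall analysis. So you have identified the right route; it is not a genuinely different approach from the cited source.

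As a proof, however, there is a genuine gap, and you name it yourself: the entire content of the theorem sits in the step you defer (``running this wall-crossing analysis yields closed formulae\dots'', ``the main obstacle is the second step''). Without the explicit extremal classes of $\overline{\mathrm{Eff}}(M_{d,\chi})$ and $\mathrm{Nef}(M_{d,\chi})$ as functions of $(d,\chi)$, and without a proof that the resulting marked configuration, modulo the $\mathrm{GL}_2(\mathbb{Z})$ change-of-basis ambiguity, recovers $\chi \bmod d$ up to sign, nothing has been established; moreover the reduction to ``a finite arithmetic verification'' is misleading, since the theorem quantifies over all $d\geq 3$, so one needs uniform closed formulas and a uniform separation argument rather than a case check. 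Two further soft spots: you assume $\gcd(d,\chi)=1$ from the outset (smoothness, existence of a universal sheaf, the Mori-dream-space claim), while the statement as quoted carries no coprimality hypothesis, so the singular non-coprime case --- where there is no universal family and the divisor classes must be built, e.g., from Le Potier's determinantal construction --- is not addressed; and ``it is a Mori dream space'' is asserted without justification and is in any case not the point, since what you actually need (explicit rational generators of both cones and their $\chi$-dependence) is precisely the computation that is missing. In short: correct strategy, matching the cited reference, but the decisive steps are outlined rather than carried out.
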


The main result of this paper is a cohomological version of Theorem \ref{woolf}:

\begin{thm}\label{main}
    For $d\geq 1$ and $\chi, \chi'$ coprime to $d$, there is an isomorphism of graded $\mathbb{C}$-algebras
    \[
    A^*(M_{d,\chi}) \simeq A^*(M_{d,\chi'})
    \]
    if and only if $\chi \equiv \pm \chi' \mod d$.
\end{thm}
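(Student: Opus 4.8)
One direction is immediate: if $\chi\equiv\pm\chi'\pmod d$, then a suitable composition of the symmetries $\psi_1$ and $\psi_2$ gives an isomorphism of varieties $M_{d,\chi}\simeq M_{d,\chi'}$, hence an isomorphism of graded $\mathbb{C}$-algebras in cohomology. The content is the converse. We may assume $\phi(d)>2$, i.e. $d=5$ or $d\ge 7$, since otherwise every $\chi$ coprime to $d$ is $\equiv\pm1\pmod d$ and the statement is empty; in particular $d\ge 3$, so that $A^1(M_{d,\chi})=H^2(M_{d,\chi})$ is $2$-dimensional (Dr\'ezet).

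The plan for the converse is to extract from the graded ring a numerical invariant that remembers $\chi$ up to sign modulo $d$. The input is the tautological generation theorem for $M_{d,\chi}$: the ring is generated by the Künneth components of the Chern character of a universal sheaf $\mathbb{F}$ on $M_{d,\chi}\times\mathbb{P}^2$. Two features are essential. First, $\mathbb{F}$ is unique only up to twisting by a line bundle pulled back from $M_{d,\chi}$, and this ambiguity translates into an explicit affine action on the tautological generators, depending on $d$ and $\chi$; together with the sign action of the duality $\psi_2$ this generates a ``gauge'' group $G_{d,\chi}$ acting on the presentation. Second, the class $H:=\pi^*h\in A^1(M_{d,\chi})$ pulled back from the linear system $\mathbb{P}^N=|dH|$ is tautological and twist-independent; and since $\dim\mathbb{P}^N=\binom{d+2}{2}-1<d^2+1=\dim M_{d,\chi}$, the line $\mathbb{C}H$ is distinguished inside $A^1$ by the ring structure alone, for instance as the unique ray all of whose powers already vanish in degree $<\dim M_{d,\chi}$ (this requires checking, via the birational geometry of $M_{d,\chi}$, that no other ray in $A^1$ has this property). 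Hence any isomorphism $\Phi\colon A^*(M_{d,\chi})\xrightarrow{\sim}A^*(M_{d,\chi'})$ sends $H$ to a scalar multiple of $H'$.

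The crux is a rigidity statement: after rescaling, $\Phi$ must carry the tautological generators of $A^*(M_{d,\chi})$ to those of $A^*(M_{d,\chi'})$ up to the action of $G$. Since the tautological subring is all of cohomology, the Poincaré pairing — and therefore the entire ideal of relations among the tautological generators — is recovered from the abstract ring; the plan is to show that these relations, analyzed in low cohomological degree, pin the generators down tightly enough that the $G$-orbit of the discrete type of the presentation, which encodes $\chi\bmod d$ up to sign, is an isomorphism invariant. Granting this, one finishes by a single explicit computation: a well-chosen tautological intersection number $N_d(\chi)=\int_{M_{d,\chi}}H^{a}\,\theta^{b}\cdot(\text{further tautological classes})$, with $\theta$ a normalized tautological divisor, is a polynomial in $\chi$ for fixed $d$, is invariant under $\chi\mapsto-\chi$, and — modulo the gauge action on $\theta$ — satisfies $N_d(\chi)=N_d(\chi')$ precisely when $\chi\equiv\pm\chi'\pmod d$; verifying this last implication is elementary arithmetic once the closed formula is in hand.

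The main obstacle is the rigidity step: showing that an abstract isomorphism of graded $\mathbb{C}$-algebras is forced to respect the tautological structure up to gauge. The key technical ingredient is an explicit determination of the relevant tautological intersection numbers on $M_{d,\chi}$ as functions of $(d,\chi)$ — obtainable by Thaddeus-style wall-crossing / master-space methods or from known closed formulas — since this is the only place where $\chi$ genuinely enters and is then transmitted, through the Poincaré pairing, to the relations. Balancing how much of the presentation its low-degree part determines against the gauge ambiguity of the universal sheaf is the delicate point of the proof.
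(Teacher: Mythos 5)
Your ``if'' direction and the reduction to large $d$ are fine, but the converse as you present it is a plan whose two load-bearing steps are precisely the ones left unproven, and they are where all the difficulty sits. First, the ``rigidity statement'' --- that an abstract graded $\mathbb{C}$-algebra isomorphism must carry tautological generators to tautological generators up to a gauge action --- is asserted as the crux with no mechanism for proving it; likewise the claim that $\mathbb{C}H$ is the unique ray in $A^1$ all of whose powers vanish in degree $<\dim M_{d,\chi}$ is only ``to be checked'' and is not obviously accessible from the abstract ring. Second, the finishing move presupposes a closed formula for a tautological intersection number $N_d(\chi)$ that is simultaneously gauge-invariant, symmetric under $\chi\mapsto-\chi$, and separates residues modulo $d$; no candidate is exhibited, and intersection numbers are in any case only defined up to the scalar ambiguities of the classes and of the top-degree identification, so even formulating the invariant requires the normalizations you have not supplied. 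As written, the proposal is therefore a strategy with genuine gaps, not a proof.

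It may help to see how the paper sidesteps your rigidity step entirely. Since $A^*(M_{d,\chi})$ is generated by the $3d-7$ tautological classes in degrees $\leq d-2$ with \emph{no relations below degree $d$} (Theorem \ref{generation}), any graded isomorphism $\phi$ lifts uniquely to a graded automorphism of the free algebra $\mathbb{C}[T]$ --- there is no need to show generators go to generators up to gauge, and no need to single out $H$. The $\chi$-dependence is then detected in the only place it can be: $\phi$ must map the $3$-dimensional space of degree-$d$ relations, which Proposition \ref{prop2.11} shows to be tautological and hence explicitly computable, to the corresponding space for $\chi'$. Truncating the relations to $T^2\otimes T^{d-2}$ turns this into matrix equations $A^{\mathsf{T}}M_iB=\sum_j s_{ij}M_j'$; a determinant trick identifies the pencil $\det\!\left(\sum_i x_iM_i\right)$ with a nodal cubic whose node constrains $S$, and a larger truncation produces a constraint polynomial in $(d,\chi,\chi')$ whose arithmetic analysis forces $\chi=\chi'$ or $\chi+\chi'=d$. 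In effect, the role you assign to a hoped-for intersection-number invariant is played by this explicit constraint extracted from the degree-$d$ relations; without carrying out something of that kind, your outline cannot be completed.
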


We find this cohomological $\chi$-dependence of ring structure interesting, since the (intersection) Betti numbers of $M_{d,\chi}$ are \textit{$\chi$-independent}: in particular, we have an isomorphism
\[
A^*(M_{d,\chi})\simeq A^*(M_{d,\chi'})
\]
of \textit{graded vector spaces} for arbitrary $\chi$ and $\chi'$ coprime to $d$; see Theorem \ref{MS} for the precise statement. As a corollary of Theorem \ref{main}, we deduce that $M_{d,\chi}$ are in general topologically different, strengthening Theorem~\ref{woolf}:

\begin{cor}
 For any $\chi, \chi'$ coprime to $d$, the moduli spaces $M_{d,\chi}$ and $M_{d,\chi'}$ are not homeomorphic unless $\chi \equiv \pm \chi' \mod d$.
\end{cor}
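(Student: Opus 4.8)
The plan is to derive the corollary directly from Theorem~\ref{main}, using only the elementary fact that the even-degree part of singular cohomology is a homeomorphism invariant as a graded ring. Suppose $f\colon M_{d,\chi}\to M_{d,\chi'}$ is a homeomorphism. By functoriality of singular cohomology with $\mathbb{C}$-coefficients, $f$ induces an isomorphism of graded $\mathbb{C}$-algebras $f^{*}\colon H^{*}(M_{d,\chi'},\mathbb{C})\xrightarrow{\ \sim\ }H^{*}(M_{d,\chi},\mathbb{C})$.

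Next I would observe that $H^{\mathrm{even}}(-,\mathbb{C})=\bigoplus_{i\ge 0}H^{2i}(-,\mathbb{C})$ is a graded subalgebra of $H^{*}(-,\mathbb{C})$, and that any graded algebra isomorphism sends even-degree classes to even-degree classes, hence restricts to an isomorphism of the even-degree subalgebras. Under the coprimality hypothesis these subalgebras are by definition $A^{*}(M_{d,\chi'})$ and $A^{*}(M_{d,\chi})$, with the convention $A^{i}=H^{2i}$. Hence $f^{*}$ restricts to an isomorphism of graded $\mathbb{C}$-algebras $A^{*}(M_{d,\chi})\simeq A^{*}(M_{d,\chi'})$, and Theorem~\ref{main} forces $\chi\equiv\pm\chi'\bmod d$. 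This is exactly the contrapositive of the asserted implication.

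I would then add a sentence noting that the dichotomy is sharp: when $\chi\equiv\pm\chi'\bmod d$, a suitable composition of iterates of the symmetries $\psi_{1}$ and $\psi_{2}$ gives an isomorphism of algebraic varieties $M_{d,\chi}\simeq M_{d,\chi'}$, in particular a homeomorphism. There is no real obstacle here: all of the mathematical content is contained in Theorem~\ref{main}, and the corollary merely packages the observation that a homeomorphism already determines the even cohomology ring (equivalently, since $(d,\chi)=1$, the Chow ring with $\mathbb{C}$-coefficients) as a graded algebra.
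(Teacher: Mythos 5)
Your proposal is correct and is exactly the argument the paper intends (the paper leaves the deduction from Theorem~\ref{main} implicit): a homeomorphism induces a graded $\mathbb{C}$-algebra isomorphism on singular cohomology, which restricts to the even-degree subalgebras $A^{*}(M_{d,\chi})\simeq A^{*}(M_{d,\chi'})$, and Theorem~\ref{main} then forces $\chi\equiv\pm\chi'\bmod d$. The closing remark on sharpness via $\psi_1,\psi_2$ matches the discussion in Section~\ref{section0.1}.
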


\subsection{BPS invariants and $\chi$-independence.} One motivation to study the moduli spaces $M_{d,\chi}$ comes from enumerative geometry. Let $X= \mathrm{Tot}(K_{\mathbb{P}^2})$ be the local Calabi--Yau 3-fold given by the total space of the canonical bundle on $\mathbb{P}^2$. Considerations from physics \cite{GV} predict an action of the Lie algebra $\mathfrak{sl}_2\times \mathfrak{sl}_2$ on the cohomology of a certain moduli space of $D$-branes supported on degree $d$ curves in $X$, yielding double-indexed integral invariants 
\begin{equation}\label{BPS}
  n_{d}^{i,j}\in \mathbb{Z}  
\end{equation}
as the dimensions of the weight spaces of this $\mathfrak{sl}_2\times \mathfrak{sl}_2$-action. These are known as \textit{refined BPS invariants} of $X$, which are expected to refine curve counting invariants for $X$ defined via Gromov--Witten/Donaldson--Thomas/Pandharipande--Thomas theory \cite{PT}.

\smallskip


One proposal by Hosono--Saito--Takahashi \cite{HST}, Kiem--Li \cite{KL}, and Maulik--Toda \cite{MT} suggests a mathematical definition of the invariants (\ref{BPS}) by the \textit{perverse filtration} on the cohomology of moduli spaces of one-dimensional sheaves. More precisely, the moduli space $M_{d,\chi}$ admits a Hilbert--Chow morphism
\[
h: M_{d,\chi} \longrightarrow \mathbb{P}H^0(\mathbb{P}^2, \mathcal{O}_{\mathbb{P}^2}(d)),
\]
sending a sheaf to its Fitting support. This map is proper and induces an increasing filtration on the intersection cohomology
\[
P_0\, \mathrm{IH}^*(M_{d,\chi}) \subset P_1\, \mathrm{IH}^*(M_{d,\chi}) \subset \cdots \subset \mathrm{IH}^*(M_{d,\chi})
\]
called the perverse filtration; see \cite[Section 1.1]{P=C}. The invariants (\ref{BPS}) are defined in \textit{loc. cit.} as the dimension of the graded pieces of this filtration:
\begin{equation}
\label{defBPS}
n_{d}^{i,j}\coloneqq \dim \mathrm{Gr}_i^P \,\mathrm{IH}^{i+j}(M_{d,\chi}).
\end{equation}

For this to be well-defined, the RHS in (\ref{defBPS}) should not depend on the choice of $\chi$, which is a priori non-trivial in light of Theorem \ref{woolf}. This is a special case of Toda's cohomological $\chi$-independence conjecture \cite[Conjecture 1.2]{Toda}; see also \cite[Conjecture 0.4.3]{PB} for a version on the intersection Betti numbers of $M_{d,\chi}$. The following theorem of Maulik and Shen confirms this conjecture\footnote{The case where $\chi, \chi'$ are coprime to $d$ was previously proven by Bousseau \cite{PB}.}.

\begin{thm}[{\cite[Theorem 0.1]{MS_GT}}]\label{MS}
    For any $\chi, \chi' \in \mathbb{Z}$ not necessarily coprime to $d$, there is a (non-canonical) isomorphism of graded vector spaces
    \[
    \mathrm{IH}^*(M_{d,\chi}) \simeq \mathrm{IH}^*(M_{d,\chi'})
    \]
    preserving the perverse filtration on both sides.
\end{thm}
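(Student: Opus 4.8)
The plan is to realise the perverse filtration on $\mathrm{IH}^*(M_{d,\chi})$ through the Hilbert--Chow morphism $h_\chi\colon M_{d,\chi}\to B$, where $B\coloneqq\mathbb{P}H^0(\mathbb{P}^2,\mathcal{O}_{\mathbb{P}^2}(d))$, and to transport all the cohomological data across different $\chi$ over the common base $B$. The perverse filtration entering \eqref{defBPS} is the Leray filtration of $h_\chi$, while the Hodge filtration is intrinsic, so it is enough to produce an isomorphism
\[
Rh_{\chi,*}\,\mathrm{IC}_{M_{d,\chi}}\;\simeq\;Rh_{\chi',*}\,\mathrm{IC}_{M_{d,\chi'}}
\]
in the derived category of mixed Hodge modules on $B$; applying $R\Gamma(B,-)$ and keeping track of the perverse $t$-structure then produces a (non-canonical) isomorphism $\mathrm{IH}^*(M_{d,\chi})\simeq\mathrm{IH}^*(M_{d,\chi'})$ compatible with both filtrations.

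First I would analyse the open locus $U^{\circ}\subseteq B$ of smooth curves. There $h_\chi^{-1}(U^{\circ})\to U^{\circ}$ is a torsor under the relative Jacobian $\pi\colon\mathrm{Jac}^{0}(\mathcal{C}^{\circ}/U^{\circ})\to U^{\circ}$ of the universal smooth curve, and a torsor under an abelian scheme has the same higher direct images as the abelian scheme itself, so $R(h_\chi)_*\mathbb{Q}|_{U^{\circ}}\simeq\bigwedge^{\bullet}R^{1}\pi_*\mathbb{Q}$ as a variation of Hodge structure, which is visibly independent of $\chi$. (Over the larger open $U\subseteq B$ of integral curves the same conclusion holds for $Rh_{\chi,*}\mathrm{IC}$: \'etale-locally on $U$ there is a section $\sigma$ of the smooth locus of the universal curve $\mathcal{C}/U$, and tensoring by $\mathcal{O}_{\mathcal C}((\chi'-\chi)\sigma)$ identifies the two relative compactified Jacobians over $U$.) The second step is a Ng\^o-type \emph{support theorem} for $h_\chi$: every simple summand of $Rh_{\chi,*}\mathrm{IC}_{M_{d,\chi}}$ in the decomposition theorem has full support $B$, equivalently is the intermediate extension $j_{!*}$ of its restriction along $j\colon U^{\circ}\hookrightarrow B$. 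Granting this, the whole complex $Rh_{\chi,*}\mathrm{IC}_{M_{d,\chi}}$ — its summands, their shifts and their multiplicities — is reconstructed from its restriction to $U^{\circ}$, which by the first step does not depend on $\chi$; this gives the displayed isomorphism and hence the theorem.

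The hard part is the support theorem. Proving it requires bounding the dimension of the fibers of $h_\chi$ over each stratum of the discriminant by the $\delta$-invariant (cogenus) of the corresponding curve — a $\delta$-regularity estimate in the spirit of Ng\^o and of Migliorini--Shende--Viviani for the universal compactified Jacobian of planar curves — and then using relative Hard Lefschetz together with the Verdier self-duality of $Rh_{\chi,*}\mathrm{IC}$ to pair up and rule out the would-be extra supports. The genuinely delicate point is that the naive $\delta$-regularity inequality breaks down along the deepest, non-reduced loci such as the multiple line $d\cdot L$, where $h_\chi$ is very far from equidimensional and the cogenus grows faster than the codimension of the stratum; controlling these loci — by a sharper stratum-by-stratum estimate, by excising them and bounding the error via weights, or by bypassing the global support theorem entirely in favour of the presentation of $M_{d,\chi}$ as a moduli space on the Calabi--Yau threefold $\mathrm{Tot}(K_{\mathbb{P}^2})$, where cohomological integrality supplies $\chi$-independent BPS summands directly — is where the real effort goes. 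A last subtlety: over $U$ the identification of the two families is a priori defined only up to a $\mathbb{G}_m$-gerbe, and one must check that this does not obstruct the isomorphism of the underlying semisimple perverse sheaves on $B$ — which it does not, since the gerbe twists only the coefficient line bundles and not the isomorphism type of the $\mathrm{IC}$ complex of the total space.
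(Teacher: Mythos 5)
First, a framing point: the paper does not prove Theorem \ref{MS} at all --- it is quoted verbatim from Maulik--Shen \cite{MS_GT} and used as a black box --- so there is no internal argument to compare yours against; your proposal has to be judged as a sketch of the Maulik--Shen theorem itself. As such, its first half is fine and standard: reducing to an isomorphism $Rh_{\chi,*}\mathrm{IC}\simeq Rh_{\chi',*}\mathrm{IC}$ of Hodge modules over the common linear system, and the $\chi$-independence of the restriction to the locus of integral curves via \'etale-local sections of the smooth locus and twisting by $\mathcal{O}((\chi'-\chi)\sigma)$, is exactly the expected setup (in the style of Migliorini--Shende--Viviani). One small correction there: the filtration in \eqref{defBPS} is the \emph{perverse} Leray filtration of $h_\chi$, not the classical Leray filtration.

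The genuine gap is the step you yourself label ``the hard part.'' Over the non-integral locus the situation is not merely technically harder: semistability (and hence the moduli space itself, including its strictly semistable points when $\gcd(d,\chi)\neq 1$) genuinely depends on $\chi$ there, the compactified-Jacobian/torsor structure degenerates, and over the deepest non-reduced strata (e.g.\ $d\cdot L$) the fibres are so large that $\delta$-regularity --- the hypothesis that makes Ng\^o-type support theorems work --- fails outright, as you note. So the asserted full-support statement is not a lemma one can ``grant'': it is essentially the entire content of the theorem, and your proposal replaces its proof by a menu of unexecuted options (``sharper stratum-by-stratum estimates,'' ``excision and weight bounds,'' ``cohomological integrality on $\mathrm{Tot}(K_{\mathbb{P}^2})$''), none of which is carried out or even shown to be viable; the actual argument of \cite{MS_GT} requires substantially more than a fibre-dimension estimate. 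Until one of these routes is made precise, the proof does not go through: everything after ``Granting this'' is conditional on exactly the input that distinguishes this theorem from the classical integral-curve case. The concluding remark about the $\mathbb{G}_m$-gerbe is also asserted rather than proved, but it is a secondary issue compared with the missing support theorem.
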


If we restrict to the coprime case, intersection cohomology coincides with singular cohomology which admits a canonical $\mathbb{Q}$-algebra structure. It is then natural to ask if we can choose the isomorphism of Theorem \ref{MS} to be an isomorphism of $\mathbb{Q}$-algebras. More generally, we can ask the following:

\begin{question}[{\cite[Section 0.3]{PS}}]\label{0.5}
    For any $\chi, \chi'$ coprime to $d$, is there an isomorphism
    \[
    H^*(M_{d,\chi},\mathbb{Q}) \simeq H^*(M_{d,\chi'}, \mathbb{Q})
    \]
    of graded $\mathbb{Q}$-algebras?
\end{question}

Theorem \ref{main} gives a complete answer to this question, in the stronger sense that we work with $\mathbb{C}$-coefficients instead of $\mathbb{Q}$.


\subsection{Structure of the proof of Theorem \ref{main}.} We outline the proof of our main result here, as the argument is of an elementary nature but somewhat long. It was explained in \cite{PS} that the rings $A^\ast(M_{d,\chi})$ admit a minimal set of $3d-7$ generators 
\[T=\{c_0(2), c_2(0)\}\cup \bigcup_{k=2}^{d-2}\{c_{k-1}(2), c_k(1), c_{k+1}(0)\}\,.\]
In other words, there is a surjective graded algebra homomorphism $\BC[T]\twoheadrightarrow  A^\ast(M_{d,\chi})$. This homomorphism is an isomorphism up to degree $d-1$, but in degree $d$ there are exactly 3 relations, cf. Theorem \ref{generation}. As a first step toward the proof, we show that these 3 relations $R_1, R_2, R_3$ spanning $\ker(\BC[T]^d\to A^d(M_{d,\chi}))$ can be obtained explicitly from the tautological relations of \cite[Section 2]{PS}, cf. Proposition \ref{prop: three relations are tautological.}.

 We then proceed by showing that if $\chi$ and $\chi'$ are not related by the symmetries (a) and (b), i.e., $\chi \not\equiv \pm \chi' \mod d$, then there is no automorphism of the graded algebra $\BC[T]$ that sends the 3-dimensional subspace of relations $\ker(\BC[T]^d\to A^d(M_{d,\chi}))\subseteq \BC[T]^d$ to $\ker(\BC[T]^d\to A^d(M_{d,\chi'}))$. To achieve this, we do not use the full relations $R_1, R_2, R_3$, whose sizes grow considerably as $d$ increases, but only a truncated version of those. Concretely, we project the subspaces of relations onto the 18-dimensional subspace 
\[c_{d-3}(2)\cdot \BC[T]^2\oplus c_{d-2}(1)\cdot \BC[T]^2\oplus c_{d-1}(0)\cdot\BC[T]^2\subseteq \BC[T]^d\,.\]
This truncated form of the relations is completely explicit: for each of the 3 relations, we only need to compute the 18 coefficients that are rational functions in $d$ and $\chi$. 

Showing that there is no automorphism of $\BC[T]$ relating the (truncated) subspaces of relations for $\chi$ and $\chi'$ then becomes a very concrete, but still involved, linear algebra problem. For this, we employ a determinant trick in \textit{Step 2} of Section \ref{sec: proof} that imposes strong restrictions on the possible automorphisms. The latter fall into two types, which we call \textit{Type I} and \textit{Type II} solutions.  We treat them separately in Sections \ref{sec: type1}--\ref{sec: type1extended}, showing that both cannot exist by analyzing certain numerical constraints (see for example Equation \eqref{constraint}) on the triple $(d,\chi, \chi')$.

\smallskip

Our proof of Theorem \ref{main} involves heavy computations aided by the software Mathematica~\cite{mathematica}. The code (together with a printed file) has been uploaded to the third author's website
\centerline{\url{https://github.com/Weite-Pi/weitepi.github.io}}
under the name
\texttt{cohomological ring chi-dependence}, which we shall frequently refer to in the later part of this paper. 

\subsection{Relations to other works.} The moduli space $M_{d,\chi}$ shares similar features with two types of other moduli spaces, the moduli of Higgs bundles\footnote{Recall that Higgs bundles on a curve $C$ are in correspondence with one-dimensional sheaves on the surface $\textup{Tot}(K_C)$ via the spectral correspondence \cite{BNR}. This analogy provides another motivation to study $M_{d,\chi}$; see \cite[Introduction]{PS} for details.} and the moduli of one-dimensional sheaves on K3 surfaces. The parallel of Question \ref{0.5} for those cases has a positive answer. Cohomology rings of the moduli of Higgs bundles are proven to be $\chi$-independent in the stronger sense that perverse filtrations are preserved, as predicted by the $P=W$ conjecture \cite{dCHM1, MS_PW, HMMS}. A direct proof using techniques in characteristic $p$ is given in \cite{dCMSZ}. The case of K3 surfaces follows from the fact that moduli of sheaves on K3 surfaces with respect to a primitive class and generic stability is birational to the Hilbert scheme of points \cite{BM} and that any two birational projective hyperkähler manifolds are deformation equivalent \cite{H}. 

Our main result asserts that the cohomology rings $H^{*}(M_{d,\chi}, \mathbb{C})$ are $\chi$-dependent in general; on the other hand, some \textit{$\chi$-independent} multiplicative structures have been observed or speculated: 

\begin{itemize}
    \item It is conjectured and proven under certain assumptions in \cite{BLM} that the \textit{Virasoro constraints} hold for the moduli spaces $M_{d,\chi}$. Briefly speaking, the Virasoro constraints predict that certain intersection numbers on $M_{d,\chi}$ obtained by integrating natural cohomology classes satisfy some explicit universal relations. See \cite[Section 1.3]{BLM} for the precise statement.

    \item The main theorem of \cite{PS} is a uniform\footnote{In the sense that the generators $c_k(j)$ in Theorem \ref{generation} are defined without explicit reference to $\chi$.} minimal generation and freeness result on $H^*(M_{d,\chi}, \mathbb{C})$ for all $\chi$ coprime to $d$; see Theorem \ref{generation} for a precise and slightly stronger statement. The $P=C$ conjecture formulated in \cite{P=C} seeks to identify the perverse filtration on the free part of $H^*(M_{d,\chi}, \mathbb{Q})$ with an explicit \textit{Chern filtration} defined in terms of the generators. This prediction is also independent of $\chi$. 
    
    \item Finally, we remark that the perverse filtration on $H^*(M_{d,\chi}, \mathbb{C})$ comes from the ring structure. Indeed, denoting by $L$ the pull-back of the hyperplane class by the Hilbert--Chow morphism, it is completely determined by the multiplication operator $L$ according to \cite[Proposition 5.2.4]{dCM0}.  

    
\end{itemize}

\subsection{Acknowledgments.} We thank Yakov Kononov for his help on coding and for numerous helpful conversations. The third author wishes to thank Rahul Pandharipande for inviting him to visit ETH Zürich, which eventually resulted in this collaboration, and thanks to Junliang Shen, his academic advisor, for proposing this interesting problem in the first place. 

The first author is supported by the grant SNF-200020-182181. The second author is supported by ERC-2017-AdG-786580-MACI. The project received funding from the European Research Council (ERC) under the European Union Horizon 2020 research and innovation programme (grant agreement 786580).

\section{Tautological classes and relations in \texorpdfstring{$A^d(M_{d,\chi})$}{AM}}

For the remainder of this paper, we assume $\chi, \chi'$ are coprime to $d$. This section provides some preliminary results on the cohomology of the moduli spaces $M_{d,\chi}$. We recall the normalized tautological classes and certain tautological relations introduced in \cite{PS}, and prove a first result on the relations in $A^d(M_{d,\chi})$.

\subsection{Tautological classes.}

Under the assumption that $\mathrm{gcd}(d,\chi)=1$, there exists a universal sheaf \cite[Theorem 4.6.5]{HL} over the product $\mathbb{P}^2 \times M_{d,\chi}$ which we denote by $\mathbb{F}$. For a stable sheaf $[\mathcal{F}]\in M_{d,\chi}$, the restriction of $\mathbb{F}$ to the fiber $\mathbb{P}^2 \times [\mathcal{F}]$ recovers $\mathcal{F}$.

Consider the two projection maps $\pi_P$ and $\pi_M$ from $\mathbb{P}^2 \times M_{d,\chi}$ to its first and second components. One way to obtain natural cohomology classes on $M_{d,\chi}$ is to take the Chern characters of $\mathbb{F}$, intersect with classes pulled back from $\mathbb{P}^2$, and push forward to $M_{d,\chi}$. The choice of $\mathbb{F}$ is however not unique: any two universal sheaves differ by tensoring with a line bundle pulled back from $M_{d,\chi}$. We thus conduct a normalization of $\mathbb{F}$ as follows.

\smallskip

For a universal sheaf $\mathbb{F}$ and a class
\[
\delta = \pi_P^* \delta_P + \pi_M^* \delta_M \in A^1(\mathbb{P}^2 \times M_{d,\chi}),\;\;\; \textrm{with \,} \delta_P \in A^1(\mathbb{P}^2), \;\; \delta_M \in A^1(M_{d,\chi}),
\]
we consider the \textit{twisted} Chern character
\[
\mathrm{ch}^\delta(\mathbb{F})\coloneqq \mathrm{ch}(\mathbb{F}) \cdot \exp(\delta),
\]
and denote by $\mathrm{ch}_k^\delta(\mathbb{F})$ its degree $k$-part. For any class $\gamma\in A^*(\mathbb{P}^2)$, we write
\[
\int_\gamma \mathrm{ch}_k^\delta(\mathbb{F})\coloneqq {\pi_M}_* (\pi_P^* \gamma \cdot \mathrm{ch}_k^\delta(\mathbb{F})) \in A^*(M_{d,\chi}). 
\]

 It is proven in \cite[Proposition 1.2]{PS} that for a fixed $\mathbb{F}$, there exists a {unique} class $\delta_0=\delta_0(\BF)$ as above satisfying the normalizing conditions
    \begin{equation}\label{norm}
    \int_H \mathrm{ch}_2^{\delta_0} (\mathbb{F})=0,\;\;\; \int_{\mathbf{1}_{\mathbb{P}^2}} \mathrm{ch}^{\delta_0}_2(\mathbb{F})=0.
    \end{equation}
    This class is computed explicitly in \cite[Proposition 2.1]{P=C}: 
    \begin{equation}
    \label{delta}
    \delta_0=\left(\frac{3}{2}-\frac{\chi}{d}\right)\cdot H-\frac{1}{d}\left(\left(\frac{3}{2}-\frac{\chi}{d}\right)\int_{H^2}\ch_1(\BF)+\int_H \mathrm{ch}_2(\mathbb{F})\right).
    \end{equation}

\smallskip

Note that the twisted Chern character $\ch^{\delta_0}(\BF)$ does \textit{not} depend on the choice of $\BF$ anymore. With this normalizing class, we define the \textit{tautological classes}
\begin{equation}
\label{tautclass}
c_k(j) \coloneqq \int_{H^j} \mathrm{ch}_{k+1}^{\delta_0}(\mathbb{F}) \in A^{k+j-1}(M_{d,\chi}).
\end{equation}

\smallskip

We collect some basic properties of the tautological classes from \cite[Section 1.2]{PS}.

\begin{prop}\label{prop2.2}
    Let $c_k(j)$ be the tautological classes defined by (\ref{tautclass}).
    \begin{enumerate}
        \item[(a)] The classes $c_k(j)$ do not depend on the choice of a universal sheaf.

        \item[(b)] We have
        \[
        c_1(0)=0 \in A^0(M_{d,\chi}), \;\;\; c_1(1) = 0 \in A^1(M_{d,\chi}),\;\;\; c_0(1)=d\in A^0(M_{d,\chi}).
        \]

        \item[(c)] Let $\psi_1$ and $\psi_2$ be the two symmetries introduced in Section \ref{section0.1}. Then we have
        \[
        \psi_1^* c_k(j)= c_k(j),\;\;\; \psi_2^* c_k(j) = (-1)^k c_k(j).
        \]
    \end{enumerate}
\end{prop}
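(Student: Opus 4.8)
I would prove the three parts separately. Part (a) is essentially the remark preceding \eqref{tautclass}: if $\BF,\BF'$ are two universal sheaves on $\BP^2\times M_{d,\chi}$, then $\BF'=\BF\otimes\pi_M^*L$ for some $L\in\Pic(M_{d,\chi})$, hence $\ch(\BF')=\ch(\BF)\exp(\pi_M^*c_1(L))$; the class $\delta_0-\pi_M^*c_1(L)$ is again of the allowed form $\pi_P^*\delta_P+\pi_M^*\delta_M$, and twisting $\BF'$ by it produces exactly $\ch^{\delta_0}(\BF)$, which satisfies the normalizing conditions \eqref{norm}. By the uniqueness in \cite[Proposition 1.2]{PS} this forces $\delta_0-\pi_M^*c_1(L)=\delta_0(\BF')$, so $\ch^{\delta_0(\BF')}(\BF')=\ch^{\delta_0}(\BF)$ and all the $c_k(j)$ are unchanged. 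Part (b) is a direct check: $c_1(0)=\int_{\mathbf{1}_{\BP^2}}\ch_2^{\delta_0}(\BF)$ and $c_1(1)=\int_H\ch_2^{\delta_0}(\BF)$ are literally the two conditions \eqref{norm}; for $c_0(1)$ one uses that $\BF$ is fibrewise a pure one-dimensional sheaf, so $\ch_0(\BF)=0$ and $\ch_1^{\delta_0}(\BF)=\ch_1(\BF)=c_1(\BF)$, and decomposing $A^1(\BP^2\times M_{d,\chi})=\pi_P^*A^1(\BP^2)\oplus\pi_M^*A^1(M_{d,\chi})$ and restricting to a fibre $\BP^2\times\{[\CF]\}$, where $c_1(\BF)$ becomes $[\mathrm{supp}(\CF)]=dH$, gives $c_1(\BF)=d\,\pi_P^*H+\pi_M^*(\text{something})$, whence $c_0(1)=d\cdot{\pi_M}_*\pi_P^*H^2=d$.

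For part (c) I would work throughout with the canonical twisted Chern character $\widehat{\ch}_{d,\chi}:=\ch^{\delta_0}(\BF)$ on $\BP^2\times M_{d,\chi}$ (well defined by part (a)) and reduce everything to a transformation law for it under $\Id\times\psi_i$, applying $\int_{H^j}$ only at the end. For $\psi_1$: given a universal sheaf $\BG$ on $\BP^2\times M_{d,\chi+d}$, the sheaf $\BG':=(\Id\times\psi_1)^*\BG\otimes\pi_P^*\CO_{\BP^2}(-1)$ is universal on $\BP^2\times M_{d,\chi}$, and $(\Id\times\psi_1)^*\widehat{\ch}_{d,\chi+d}$ is the twisted Chern character of $\BG'$ for the allowed class $\delta=(\Id\times\psi_1)^*\delta_0+\pi_P^*H$. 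The base-change identity ${\pi_M}_*\circ(\Id\times\psi_1)^*=\psi_1^*\circ{\pi_M}_*$ turns the conditions \eqref{norm} on $M_{d,\chi+d}$ into the conditions \eqref{norm} for this twisted character of $\BG'$, so by uniqueness again $(\Id\times\psi_1)^*\widehat{\ch}_{d,\chi+d}=\widehat{\ch}_{d,\chi}$; applying $\int_{H^j}$ and base change once more gives $\psi_1^*c_k(j)=c_k(j)$.

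For $\psi_2$ the scheme is identical, with the relative dual $\BD(\BF):=\BF^\vee\otimes\pi_P^*\omega_{\BP^2}[1]$, $\BF^\vee:=\mathbf{R}\mathcal{H}om(\BF,\CO)$, replacing the $\CO(1)$-twist: by purity of $\CF$ one checks $\BD(\BF)$ is a sheaf flat over $M_{d,\chi}$ with $\BD(\BF)|_{\BP^2\times\{[\CF]\}}=\CE xt^1(\CF,\omega_{\BP^2})=\psi_2(\CF)$, so $(\Id\times\psi_2)^*$ of a universal sheaf on $\BP^2\times M_{d,-\chi}$ differs from $\BD(\BF)$ only by a $\pi_M^*$-line bundle. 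Since $\ch(\BD(\BF))=-\exp(-3\pi_P^*H)\cdot\ch(\BF^\vee)$ with $\ch_k(\BF^\vee)=(-1)^k\ch_k(\BF)$, at the level of twisted Chern characters one gets $\ch_k^\delta(\BD(\BF))=(-1)^{k+1}\ch_k^{\delta'}(\BF)$ with $\delta'=3\pi_P^*H-\delta$; feeding the conditions \eqref{norm} for $\widehat{\ch}_{d,-\chi}$ through this identity and invoking uniqueness of $\delta_0$ once more yields $(\Id\times\psi_2)^*\widehat{\ch}_{k,d,-\chi}=(-1)^{k+1}\widehat{\ch}_{k,d,\chi}$. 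As $c_k(j)=\int_{H^j}\widehat{\ch}_{k+1}$, this gives $\psi_2^*c_k(j)=(-1)^{(k+1)+1}c_k(j)=(-1)^kc_k(j)$.

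The delicate point is the $\psi_2$ computation: one has to set up relative Grothendieck duality carefully — verifying that $\BD(\BF)$ is an honest flat family over $M_{d,-\chi}$ and tracking the shift $[1]$, the twist $\omega_{\BP^2}=\CO(-3)$, and all the resulting signs — and then check that these combine correctly with the explicit $\chi$-dependence of $\delta_0$ in \eqref{delta} (which changes sign under $\chi\mapsto-\chi$), so that the normalization is preserved. Everything else — part (a), part (b), and the $\psi_1$ case — is then routine bookkeeping with the base-change identities for $\pi_M$.
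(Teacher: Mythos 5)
Your proof is correct, and since the paper gives no argument for Proposition \ref{prop2.2} itself (it is quoted from \cite[Section 1.2]{PS}), your reasoning — the uniqueness of the normalizing class $\delta_0$ to dispose of the line-bundle ambiguity, the normalization \eqref{norm} read off directly for part (b), and base change along $\Id\times\psi_i$ combined with the twist by $\CO_{\BP^2}(1)$ resp.\ the shifted derived dual for part (c) — is exactly the standard argument behind that citation. The only (cosmetic) slip is that $\BD(\BF)$ is a flat family of sheaves with Euler characteristic $-\chi$ parametrized by $M_{d,\chi}$, not a family over $M_{d,-\chi}$; it is the family that induces the classifying map $\psi_2$, after which your comparison with $(\Id\times\psi_2)^*$ of a universal sheaf via part (a) goes through as written.
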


\smallskip

The next theorem gives a first result on the structure of $A^*(M_{d,\chi})$ in terms of the tautological classes. Note that we change the coefficient from $\mathbb{Q}$ to $\mathbb{C}$. 

\begin{thm}[{\cite{PS, YY5}}]\label{generation} Assume $d \geq 5$. We have:
\begin{enumerate}
    \item[(a)] $A^*(M_{d,\chi})$ is generated as a $\BC$-algebra by the $3d-7$ classes\footnote{We call these $3d-7$ classes the \textit{tautological generators.}} of degrees $\leq d-2$:
    \begin{equation}\label{taut}
   c_0(2),\, c_2(0) \in A^1(M_{d,\chi}), \;\;\;  c_{k}(0),\, c_{k-1}(1),\, c_{k-2}(2) \in A^{k-1}(M_{d,\chi}),\;\; 3\leq k \leq d-1.
    \end{equation}
    \item[(b)] There is no relation among these $3d-7$ classes in degrees $\leq d-1$.
    \item[(c)] There are exactly three linearly independent relations in degree $d$.
\end{enumerate}

\end{thm}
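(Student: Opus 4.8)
The plan is to obtain (a) from a tautological generation statement together with the explicit tautological relations of \cite[Section 2]{PS}, and then to read off (b) and (c) by comparing the Hilbert series of the free algebra on the proposed generators with the Betti numbers of $M_{d,\chi}$, which are $\chi$-independent by Theorem \ref{MS} and have been computed (see \cite{YY5}).

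First I would prove (a). The essential input is that $A^*(M_{d,\chi})$ is generated as a $\mathbb{C}$-algebra by the K\"unneth components $\int_\gamma \ch_k(\BF)$ of the Chern character of a universal sheaf, with $\gamma \in A^*(\mathbb{P}^2)$ and $k \geq 0$. Since $A^*(\mathbb{P}^2)$ is spanned by $1, H, H^2$, only $\gamma \in \{1,H,H^2\}$ are needed, and after the normalization (\ref{norm}) these classes are exactly the $c_k(j)$ with $0 \leq j \leq 2$; here $c_1(0), c_1(1)$ vanish and $c_0(1)$ is constant by Proposition \ref{prop2.2}(b), while $c_0(0)=0$ for dimension reasons. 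It remains to bound $k$: feeding in the tautological relations of \cite[Section 2]{PS}, which reflect the fact that $\BF$ is a family of sheaves supported on plane curves of degree $d$, one rewrites every $c_k(j)$ of cohomological degree $\geq d-1$ as a polynomial in the $c_k(j)$ of degree $\leq d-2$. Counting the survivors --- two in degree $1$ and three in each degree $2,\dots,d-2$ --- produces the list (\ref{taut}) of $3d-7$ generators.

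For (b) and (c), let $R^\bullet$ be the free polynomial $\mathbb{C}$-algebra on generators placed in the degrees of (\ref{taut}), so that (a) gives a graded surjection $\rho \colon R^\bullet \twoheadrightarrow A^*(M_{d,\chi})$ with
\[
\sum_{i \geq 0}(\dim R^i)\, t^i \;=\; \frac{1}{(1-t)^2}\prod_{j=2}^{d-2}\frac{1}{(1-t^j)^3}.
\]
Since $\dim A^i(M_{d,\chi}) = b_{2i}(M_{d,\chi})$ is known, the content of (b) and (c) is precisely the pair of identities
\[
\dim R^i = b_{2i}(M_{d,\chi}) \ \ (i \leq d-1), \qquad \dim R^d = b_{2d}(M_{d,\chi}) + 3 ,
\]
for, $\rho$ being surjective, the first identity forces $\rho$ to be an isomorphism in degrees $\leq d-1$ (which is (b)) and the second forces $\ker\rho$ in degree $d$ to be $3$-dimensional (which is (c)). So I would finish by evaluating the partition-type generating function on the left and matching it, degree by degree through degree $d$, against the known Poincar\'e polynomial of $M_{d,\chi}$; the hypothesis $d \geq 5$ is what places the relevant degrees $\leq d$ in the range where these Betti numbers are given by uniform formulas.

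The main obstacle is this last comparison. Given the tautological relations of \cite{PS}, the generation step (a) is comparatively formal; but (c) asks for the \emph{exact} defect $3$, not merely finiteness, so one must combine a sufficiently precise description of the low-degree Betti numbers of $M_{d,\chi}$ with a careful evaluation of $\dim R^i$ through degree $d$ and verify that the discrepancy is exactly $3$. I would expect identifying and confirming that precise discrepancy to be where the real work lies.
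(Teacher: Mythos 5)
Your proposal is correct and follows essentially the same route the paper takes for this cited result: generation by the Künneth/tautological classes (Beauville--Markman) combined with the tautological relations of \cite{PS} to cut down to the $3d-7$ generators for (a), and then a degree-by-degree comparison of the free algebra's Hilbert series with the known low-degree Betti numbers of $M_{d,\chi}$ (computed by Yuan via Hilbert schemes of points and Göttsche's formula, cf.\ \cite{YY4, YY5}) to get the exact defects $0$ and $3$ in parts (b) and (c). The only real work you defer --- the precise Betti number values through degree $d$ --- is exactly what the paper also outsources to \cite{YY5}.
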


We briefly recall the proof of Theorem \ref{generation}: (i) By the results of Beauville \cite{Beau} or Markman \cite{Integral}, the ring $A^*(M_{d,\chi})$ is generated by the tautological classes. (ii) Using the geometry of $M_{d,\chi}$, we produce tautological relations that express any tautological class in terms of the $3d-7$ classes in (\ref{taut}); see the paragraph after Proposition \ref{prop2.8}. (iii) Part (b) and (c) follow from comparing the Betti numbers of $M_{d,\chi}$ with those of Hilbert schemes of points on $\mathbb{P}^2$; this is explained in Yuan \cite{YY4, YY5}.

\medskip



As we will see, the three relations in $A^d(M_{d,\chi})$ are the central characters in our proof of Theorem \ref{main}. By carefully investigating the $\chi$-dependence of the three relations, we are able to deduce that $A^*({M_{d,\chi}})$ are non-isomorphic for different choices of $\chi$ unless they are related by the symmetries in Section \ref{section0.1}. 

\begin{rmk}
The choice of the normalization (\ref{norm}) and the shift in degrees of the Chern character in (\ref{tautclass}) are motivated by the $P=C$ conjecture \cite{P=C}. Roughly speaking, the $P=C$ conjecture predicts that the \textit{Chern grading} of the tautological generators $c_k(j)$ coincides with its \textit{perversity}; this gives a conjectural explicit description of the perverse filtration on $A^{*\leq d-2}(M_{d,\chi})$ in terms of (\ref{taut}). See \cite[Conjecture 0.3 and Proposition 1.2]{P=C} for details.
\end{rmk}

\subsection{Tautological relations}

We recall in this section certain tautological relations \cite[Section 1.2]{PS} on the cohomology of $M_{d,\chi}$. By the symmetry (a) in Section \ref{section0.1}, we may assume that $0 < \chi < d$, so that any $\mathcal{F} \in M_{d,\chi}$ satisfies $0< \mu(\mathcal{F}) <1$.

Consider the triple product $Y\coloneqq \mathbb{P}^2 \times M_{d,\chi} \times \check{\mathbb{P}}^2$, where $\check{\mathbb{P}}^2$ is the dual projective space parametrizing lines in $\mathbb{P}^2$. Let $\pi_R : Y \to \check{\mathbb{P}}^2$ be the third projection. We write $p= \pi_P \times \pi_M, \, q = \pi_P \times \pi_R$, and $r = \pi_M \times \pi_R$:
\[\begin{tikzcd}
	& Y \\
	{\BP^2\times M_{d,\chi}} & {M_{d,\chi} \times \check{\mathbb{P}}^2} & {\BP^2\times \check{\mathbb{P}}^2}
	\arrow["p"', from=1-2, to=2-1]
	\arrow["r"', from=1-2, to=2-2]
	\arrow["q", from=1-2, to=2-3].
\end{tikzcd}\]

\medskip

\noindent Let $Z\subset \mathbb{P}^2\times \check{\mathbb{P}}^2$ be the incidence subscheme, and let $\mathcal{O}_Z$ be its structure sheaf. 
For a fixed universal sheaf $\mathbb{F}$ on $M_{d,\chi}$, we consider the complex
\begin{equation}\label{H(n)}
    \mathcal{H}(n)\coloneqq R \mathcal{H}\kern -.8pt \mathit{om} ( p^*\mathbb{F}, q^*\mathcal{O}_Z\otimes \pi_P^*\mathcal{O}_{\mathbb{P}^2}(-n)) \in D^b \mathrm{Coh}(Y).
\end{equation}
 
The projection $r: Y \to M_{d,\chi} \times \check{\mathbb{P}}^2$ is a trivial $\mathbb{P}^2$-bundle, so the derived push-forward
\[
Rr_*\mathcal{H}(n)\in D^b(M_{d,\chi} \times \check{\mathbb{P}}^2)
\]
admits a three-term resolution $0\to K^0 \to K^1 \to K^2 \to 0$ by vector bundles.

\begin{lem}
For $n\in \{1,2,3\}$, we can choose $K^i$ with $K^0 = K^2 =0$ and $K^1$ free of rank $d$. 
\end{lem}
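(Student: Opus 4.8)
The plan is to compute the cohomology sheaves of the complex $\mathcal{H}(n)$ on the fibers of $r$ and show that, for $n \in \{1,2,3\}$, the only surviving cohomology is concentrated in a single degree and has constant rank $d$. Fix a point $([\mathcal{F}], \ell) \in M_{d,\chi} \times \check{\mathbb{P}}^2$; the fiber of $r$ over it is a copy of $\mathbb{P}^2$, and the restriction of $\mathcal{H}(n)$ to this fiber is $R\mathcal{H}\kern-.8pt\mathit{om}(\mathcal{F}, \mathcal{O}_\ell(-n))$, where $\mathcal{O}_\ell$ is the structure sheaf of the line $\ell \subset \mathbb{P}^2$ (the restriction of $\mathcal{O}_Z$ along $q$) twisted by $\mathcal{O}_{\mathbb{P}^2}(-n)$. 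Since $\mathcal{F}$ is a one-dimensional sheaf and $\mathcal{O}_\ell(-n)$ is supported on a line, I would first compute $\mathcal{E}\kern-1pt\mathit{xt}^i_{\mathbb{P}^2}(\mathcal{F}, \mathcal{O}_\ell(-n))$ and then take global cohomology on the line, or equivalently work directly with $R\Gamma_{\mathbb{P}^2} R\mathcal{H}\kern-.8pt\mathit{om}(\mathcal{F}, \mathcal{O}_\ell(-n)) = R\Hom_{\mathbb{P}^2}(\mathcal{F}, \mathcal{O}_\ell(-n))$.

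The key computation is to show $\Hom_{\mathbb{P}^2}(\mathcal{F}, \mathcal{O}_\ell(-n)) = 0$ and $\Ext^2_{\mathbb{P}^2}(\mathcal{F}, \mathcal{O}_\ell(-n)) = 0$ for all stable $\mathcal{F} \in M_{d,\chi}$ and all lines $\ell$, provided $n \in \{1,2,3\}$ and $0 < \chi < d$. For the $\Hom$-vanishing: a nonzero morphism $\mathcal{F} \to \mathcal{O}_\ell(-n)$ would have image a subsheaf of the pure sheaf $\mathcal{O}_\ell(-n)$ on the line, hence a sheaf of the form $\mathcal{O}_\ell(-m)$ with $m \geq n$; its slope (in the sense of $\mu$ above, normalized by intersection with $H$) is $1 - m \leq 1 - n < \mu(\mathcal{F})$ since $\mu(\mathcal{F}) \in (0,1)$ and $n \geq 1$, contradicting stability of $\mathcal{F}$ (the quotient $\mathcal{F} \twoheadrightarrow \operatorname{im}$ would destabilize). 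For the $\Ext^2$-vanishing I would use Serre duality on $\mathbb{P}^2$: $\Ext^2_{\mathbb{P}^2}(\mathcal{F}, \mathcal{O}_\ell(-n))^\vee \cong \Hom_{\mathbb{P}^2}(\mathcal{O}_\ell(-n), \mathcal{F} \otimes \omega_{\mathbb{P}^2}) = \Hom_{\mathbb{P}^2}(\mathcal{O}_\ell(-n), \mathcal{F}(-3))$, and a nonzero such map would exhibit $\mathcal{O}_\ell(-n-3)$ (or a quotient thereof, but $\mathcal{O}_\ell(-n)$ is a line bundle on $\ell$ so the map is injective once nonzero, unless it factors through a proper quotient — one has to be slightly careful and argue via the image again) as a subsheaf of $\mathcal{F}$; its slope is $1 - (n+3) \leq -3 < 0 < \mu(\mathcal{F})$, again contradicting stability. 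Once both vanishings hold, $Rr_*\mathcal{H}(n)$ has cohomology only in degree $1$, it is a coherent sheaf $\mathcal{K}^1$ on $M_{d,\chi} \times \check{\mathbb{P}}^2$, and by cohomology-and-base-change it is locally free; its rank is $-\chi(\mathcal{F}, \mathcal{O}_\ell(-n))$, a Riemann–Roch computation which I expect to yield exactly $d$ after using $\deg(\operatorname{supp}\mathcal{F}) = d$ (the leading term $c_1(\mathcal{F}) \cdot \ell = d$ should dominate, with the Euler-characteristic corrections cancelling for these three values of $n$, or more precisely $n = 2$ giving $d$ on the nose and $n = 1, 3$ needing a short check).

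The main obstacle I anticipate is the $\Ext^2$-vanishing: the naive slope argument needs care because a map $\mathcal{O}_\ell(-n) \to \mathcal{F}(-3)$ need not be injective — its kernel is a subsheaf of $\mathcal{O}_\ell(-n)$, hence again of the form $\mathcal{O}_\ell(-m)$, so the image is either zero or a torsion quotient supported at points; in the latter case the image is a zero-dimensional subsheaf of the pure one-dimensional sheaf $\mathcal{F}(-3)$, which is impossible. So in fact any nonzero such map is injective and the slope argument goes through cleanly. A secondary subtlety is confirming that the rank is exactly $d$ rather than merely constant — this I would pin down by evaluating the Euler pairing $\chi(\mathcal{F}, \mathcal{O}_\ell(-n))$ via the Hirzebruch–Riemann–Roch formula on $\mathbb{P}^2$, which is an explicit finite computation, and checking it gives $-d$ for each $n \in \{1,2,3\}$ (this is presumably why exactly these three values are singled out). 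Finally, having $Rr_*\mathcal{H}(n) \cong \mathcal{K}^1[-1]$ with $\mathcal{K}^1$ locally free of rank $d$ gives the three-term resolution $0 \to K^0 \to K^1 \to K^2 \to 0$ with $K^0 = K^2 = 0$ and $K^1 = \mathcal{K}^1$ as claimed.
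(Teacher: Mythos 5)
Your overall strategy is the same as the paper's (fiberwise computation of $\Ext^i(\mathcal{F},\mathcal{O}_\ell(-n))$, vanishing of $\Ext^0$ and $\Ext^2$ via stability and Serre duality, rank $d$ by Riemann--Roch, cohomology and base change), and your $\Hom$-vanishing and the injectivity-by-purity observation are fine. However, the $\Ext^2$ step as you wrote it is wrong, and it is exactly the step where the hypothesis $n\leq 3$ must enter. Two things go astray. First, the twist: a nonzero map $\mathcal{O}_\ell(-n)\to \mathcal{F}(-3)$, once shown injective, exhibits $\mathcal{O}_\ell(-n)$ as a subsheaf of $\mathcal{F}(-3)$, equivalently $\mathcal{O}_\ell(3-n)$ as a subsheaf of $\mathcal{F}$ — not $\mathcal{O}_\ell(-n-3)$. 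Second, and more seriously, the stability logic is inverted: producing a subsheaf of slope \emph{strictly smaller} than $\mu(\mathcal{F})$ does not contradict stability — that is precisely what stability allows. The correct contradiction is that $\mathcal{O}_\ell(3-n)\subset \mathcal{F}$ has slope $4-n\geq 1>\mu(\mathcal{F})$ (equivalently $\mu(\mathcal{O}_\ell(-n))=1-n\geq -2>\mu(\mathcal{F})-3=\mu(\mathcal{F}\otimes\omega_{\mathbb{P}^2})$), which violates stability exactly when $n\leq 3$. As written, your argument would "prove" $\Ext^2=0$ for every $n\geq 1$, which is false and which should have flagged the sign error: for $n\geq 4$ the $\Ext^2$ group can be nonzero and the lemma fails.

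Relatedly, your closing speculation about why $n\in\{1,2,3\}$ is singled out is off. The Euler pairing gives $\chi(\mathcal{F},\mathcal{O}_\ell(-n))=-d$ for \emph{all} $n$ (only the product of the first Chern classes survives), so the rank computation imposes no constraint on $n$ and there is no special role for $n=2$. The restriction $1\leq n\leq 3$ comes entirely from the two vanishings: $n\geq 1$ for $\Ext^0$ (via $\mu(\mathcal{O}_\ell(-n))=1-n\leq 0<\mu(\mathcal{F})$ and the quotient inequality) and $n\leq 3$ for $\Ext^2$ (via Serre duality and the subsheaf inequality above). With the $\Ext^2$ argument corrected as indicated, the rest of your proof (base change giving a locally free $\mathcal{K}^1$ in degree $1$ of rank $-\chi(\mathcal{F},\mathcal{O}_\ell(-n))=d$) goes through and agrees with the paper.
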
 

\begin{proof}
    This is \cite[Lemma 2.4]{PS}; we briefly recall the proof here. Take a point 
 \[
 P=([\mathcal{F}],[L])\in M_{d,\chi}\times \check{\mathbb{P}}^2,
 \]
 where $[L]\in \check{\BP}^2$ corresponds to the line $L\subseteq \BP^2$. Over the point $P$, cohomology of the complex $K^0(P) \to K^1(P)\to K^2(P)$ computes the extension groups
 \[
 \mathrm{Ext}^i(\mathcal{F}, \mathcal{O}_{L}(-n)), \quad i=1,2,3.
 \] Note that $\mu(\mathcal{O}_{L}(-n))=1-n$. By stability and Serre duality, one checks that
 \[
 \mathrm{Ext}^0(\CF, \mathcal{O}_{L}(-n))=0, \quad \mathrm{Ext}^2(\mathcal{F}, \CO_{L}(-n))=0
 \] 
 as long as $n\in \{1,2,3\}$. This implies $Rr_*\mathcal{H}(n)$ can be represented by a single vector bundle $K^1$ concentrated in degree $1$, whose rank is $d$ by a Hirzebruch--Riemann--Roch calculation.
\end{proof}

It follows that $-Rr_* \mathcal{H}(n)$ is a rank $d$ vector bundle on $M_{d,\chi} \times \check{\mathbb{P}}^2$, whence
\begin{cor}
\label{cor2.7}
For $\ell \geq d+1$ and $n\in \{1,2,3\}$, we have
\begin{equation}\label{chernrel}
 c_\ell(-Rr_*\mathcal{H}(n))=0 \in A^{\ell}(M_{d,\chi} \times \check{\BP}^2).
\end{equation}
\end{cor}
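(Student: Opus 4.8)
The plan is to reduce the statement to the elementary fact that the Chern classes of a vector bundle vanish in degrees exceeding its rank. By the preceding lemma, for $n \in \{1,2,3\}$ the complex $Rr_*\mathcal{H}(n)$ is quasi-isomorphic to a single vector bundle $K^1$ placed in cohomological degree $1$, with $\mathrm{rank}(K^1) = d$. Hence in the Grothendieck group $K^0(M_{d,\chi} \times \check{\BP}^2)$ we have $[Rr_*\mathcal{H}(n)] = (-1)^1[K^1] = -[K^1]$, so the class denoted $-Rr_*\mathcal{H}(n)$ is precisely $[K^1]$, i.e.\ the class of an honest vector bundle of rank $d$. (This is exactly why one passes to $-Rr_*\mathcal{H}(n)$: the total Chern class of $Rr_*\mathcal{H}(n)$ itself would be $c(K^1)^{-1}$, which is not a polynomial, whereas $c(-Rr_*\mathcal{H}(n)) = c(K^1)$ truncates in degree $d$.)

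First I would recall that Chern classes extend from vector bundles to $K^0$ via the splitting principle together with the Whitney sum formula, and that for an actual vector bundle $E$ of rank $r$ one has $c(E) = \prod_{i=1}^{r}(1+x_i)$ in terms of the Chern roots $x_i$; in particular $c_\ell(E) = 0$ for all $\ell > r$. Applying this with $E = K^1$ and $r = d$ yields
\[
c_\ell(-Rr_*\mathcal{H}(n)) = c_\ell(K^1) = 0 \in A^{\ell}(M_{d,\chi}\times\check{\BP}^2)
\]
for every $\ell \geq d+1$, which is the asserted vanishing.

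There is essentially no obstacle here once the lemma is granted; the only point that needs care is the bookkeeping of the cohomological shift, namely verifying that $K^1$ sits in degree $1$ so that the K-theory class $-Rr_*\mathcal{H}(n)$ is genuinely represented by a vector bundle (rather than by minus a vector bundle). With that in place the corollary is immediate, and it is recorded here only because the relations \eqref{chernrel} are the input for the subsequent manipulations on $\check{\BP}^2$.
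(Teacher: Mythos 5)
Your proof is correct and follows exactly the paper's reasoning: since the lemma shows $Rr_*\mathcal{H}(n)$ is represented by the single rank-$d$ bundle $K^1$ in degree $1$, the class $-Rr_*\mathcal{H}(n)$ is that of an honest rank-$d$ vector bundle, so its Chern classes vanish in degrees above $d$. No further comment is needed.
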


\smallskip

Recall that $\mathcal{H}(n)$ is defined by (\ref{H(n)}) in terms of a fixed universal sheaf $\mathbb{F}$. If we replace $\mathbb{F}$ by
\[
\mathbb{F}'\coloneqq \mathbb{F} \otimes \pi_M^* L,
\]
with $L$ a \textit{fractional} line bundle over $M_{d,\chi}$, then according to (\ref{H(n)}) and the projection formula, the vector bundle $-Rr_*\mathcal{H}(n)$ is replaced by its tensor with the dual line bundle $\pi_M^* L^\vee$. Using a formal argument involving Chern roots, one checks that the identity (\ref{chernrel}) also holds for $\mathbb{F}'$. 

\smallskip

In particular, we choose the fractional line bundle $L$ to have first Chern class\footnote{One can check that $\int_{H^2}\ch_1(\BF)=c_0(2)$ for \textit{every} universal sheaf $\mathbb{F}$.}
\[
c_1(L)=-\frac{1}{d}\left(\left(\frac{3}{2}-\frac{\chi}{d}\right)c_0(2)+\int_H \mathrm{ch}_2(\mathbb{F})\right),
\]
so that the normalizing class (\ref{delta}) for $\mathbb{F}'$ is given by $\delta_0(\BF')=\left(\frac{3}{2}-\frac{\chi}{d}\right)\cdot H$. Now we apply (\ref{chernrel}) to $\mathbb{F}'$. The Chern classes in (\ref{chernrel}) can be expressed in terms of the Chern character $\mathrm{ch}(Rr_*\mathcal{H}(n))$, which is computed by the Grothendieck--Riemann--Roch theorem:
\begin{align*}
\mathrm{ch}(Rr_*\mathcal{H}(n))&=r_*(\mathrm{ch}(\mathcal{H}(n))\cdot \mathrm{td}(\BP^2)) \\
&=r_*(\mathrm{ch}(p^*{\mathbb{F}'}^\vee)\cdot \mathrm{ch}(q^*\mathcal{O}_Z\otimes \pi_P^* \mathcal{O}_{\BP^2}(-n))\cdot \mathrm{td}(\BP^2)).
\end{align*}

We denote by $\beta$ the class of a line on $\check{\mathbb{P}}^2$. Expanding the right-hand side of the above equation, Corollary \ref{cor2.7} then gives relations in $A^*(M_{d,\chi} \times \check{\mathbb{P}}^2)$:


\begin{prop}\label{prop2.8}
For every $\ell \geq d+1$ and $n\in \{1,2,3\}$, the following identity holds:
\begin{equation}
 \label{relation}
 \sum_\mathbf{m} \prod_{s=1}^\ell \frac{((s-1)!)^{m_s}}{(m_s)!}\left( \pi_M^* A_{s}- \sum_{0\leq i\leq 2}\frac{\pi_R^*\beta^i}{i!}(-1)^i \pi_M^* B_{s-i}\right)^{m_s}=0.
 \end{equation}
 Here, the first sum is over all $\ell$-tuple of non-negative integers $\mathbf{m}=(m_1,m_2,\ldots, m_\ell)$ such that $m_1+2m_2+\cdots +\ell m_\ell=\ell$, and writing $\widetilde{c}_s(j)\coloneqq (-1)^{s+1}c_s(j)$, the terms $A_s, B_s$ are given by
\small
\begin{align*}
    A_s &\coloneqq\widetilde{c}_{s+1}(0)+\left(3 - n - \frac{\chi}{d}\right)\widetilde{c}_{s}(1)+\frac{1}{2d^2}\left(\left(n - \frac{7}{2}\right)d + \chi\right)\left(\left(n - \frac{5}{2}\right)d + \chi\right)\widetilde{c}_{s-1}(2),\\
    B_s &\coloneqq\widetilde{c}_{s+1}(0)+\left(2-n-\frac{\chi}{d}\right)\widetilde{c}_{s}(1)+\frac{1}{2d^2}\left(\left(n - \frac{5}{2}\right)d + \chi\right)\left(\left(n - \frac{3}{2}\right)d + \chi\right)\widetilde{c}_{s-1}(2).
\end{align*}
\end{prop}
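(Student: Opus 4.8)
The statement is essentially a bookkeeping exercise: we unpack Corollary~\ref{cor2.7} via Grothendieck--Riemann--Roch and read off the coefficient identity. The plan is as follows. First I would recall the general expansion that writes Chern classes of a virtual bundle $E$ in terms of its Chern character: if $\ch(E) = \sum_{s\geq 0} \mathrm{ch}_s(E)$ and we set $\mathrm{ch}_s(E) = (-1)^{s+1}(s-1)!\, p_s$ for $s\geq 1$ (the "power sum" normalization), then the total Chern class is the generating function $c(E) = \exp\!\big(\sum_{s\geq 1} (-1)^{s-1}(s-1)!\, \mathrm{ch}_s(E)/(s-1)!\cdots\big)$; more precisely the degree-$\ell$ Chern class is the Newton--Girard polynomial
\[
c_\ell(E) = \sum_{\mathbf{m}} \prod_{s=1}^{\ell} \frac{((s-1)!)^{m_s}}{m_s!}\,\big(\mathrm{ch}_s(E)\big)^{m_s},
\]
the sum being over $\mathbf{m}=(m_1,\ldots,m_\ell)$ with $\sum_s s\,m_s = \ell$. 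Applying this to $E = -Rr_*\mathcal{H}(n)$ and invoking Corollary~\ref{cor2.7}, the relation $c_\ell(E)=0$ becomes exactly the left-hand side of \eqref{relation}, \emph{provided} we identify $\mathrm{ch}_s(-Rr_*\mathcal{H}(n))$ with $\pi_M^*A_s - \sum_{0\leq i\leq 2}\frac{\pi_R^*\beta^i}{i!}(-1)^i\pi_M^*B_{s-i}$.

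So the core of the proof is the GRR computation of $\mathrm{ch}(Rr_*\mathcal{H}(n))$. Here I would proceed in three steps. (i) Compute $\ch(q^*\mathcal{O}_Z \otimes \pi_P^*\mathcal{O}_{\mathbb{P}^2}(-n))\cdot \mathrm{td}(\mathbb{P}^2)$ on $Y$: since $Z\subset \mathbb{P}^2\times\check{\mathbb{P}}^2$ is the incidence divisor, a Koszul resolution gives $\ch(\mathcal{O}_Z) = 1 - e^{-(h+\beta)}$ where $h=\pi_P^*H$, $\beta=\pi_R^*\beta$ in shorthand; then twist by $e^{-nh}$ and multiply by $\mathrm{td}(\mathbb{P}^2) = 1 + \tfrac{3}{2}h + h^2$. (ii) Compute $\ch(p^*\mathbb{F}'^\vee)$ using the normalization $\delta_0(\mathbb{F}') = (\tfrac{3}{2}-\tfrac{\chi}{d})H$, so that the relevant low-degree Chern character pieces of $\mathbb{F}'$ along $\mathbb{P}^2$ are controlled by the normalized tautological classes $c_k(j) = \int_{H^j}\mathrm{ch}_{k+1}^{\delta_0}(\mathbb{F}')$; dualizing introduces the signs that produce $\widetilde{c}_s(j) = (-1)^{s+1}c_s(j)$. (iii) Multiply the two, push forward along $r$ (which is $\pi_M^*$ of integration over the $\mathbb{P}^2$ factor, i.e.\ extracting the coefficient of $h^2$ and keeping track of how $h$-powers pair against the tautological classes indexed by $j\in\{0,1,2\}$), and collect terms by degree in $\beta$. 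The $\beta^0$, $\beta^1$, $\beta^2$ parts are what get bundled into $A_s$ and the three shifts $B_s, B_{s-1}, B_{s-2}$ respectively; the precise quadratic-in-$(\chi/d)$ coefficients in $A_s, B_s$ come from collecting the $\mathrm{td}$ and twist contributions together with the $(3-n-\chi/d)$-type linear shift from $\delta_0$ and the $\mathcal{O}_L(-n)$ twist.

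The main obstacle is purely organizational: correctly tracking the interaction between (a) the degree shift in the definition $c_k(j) = \int_{H^j}\mathrm{ch}_{k+1}^{\delta_0}$, (b) the dualization signs, (c) the $\delta_0$-twist which shifts the effective slope, and (d) the $\mathcal{O}_{\mathbb{P}^2}(-n)$ twist and the $\mathrm{td}(\mathbb{P}^2)$ factor — all of which contribute to the two quadratic expressions $\big((n-\tfrac{7}{2})d+\chi\big)\big((n-\tfrac{5}{2})d+\chi\big)$ and $\big((n-\tfrac{5}{2})d+\chi\big)\big((n-\tfrac{3}{2})d+\chi\big)$. I would verify these by noting the structural reason they appear: the spectral values $\tfrac{3}{2}-\tfrac{\chi}{d}$ (from $\delta_0$), $\tfrac{1}{2}-\tfrac{\chi}{d}$ (shift by one unit of $H$), and the $-n$ twist combine so that $A_s$ records the RHR contribution of $\mathcal{H}om$ at "distance $n$" and $B_s$ at "distance $n-1$"; the half-integer shifts are exactly the ones forced by $\mathrm{td}_1(\mathbb{P}^2)=\tfrac{3}{2}H$ split symmetrically. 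Once the identification $\mathrm{ch}_s(-Rr_*\mathcal{H}(n)) = \pi_M^*A_s - \sum_i \tfrac{\pi_R^*\beta^i}{i!}(-1)^i\pi_M^*B_{s-i}$ is in hand, substituting into the Newton--Girard formula for $c_\ell$ and applying Corollary~\ref{cor2.7} finishes the proof. Since \cite[Section 2]{PS} carries out the analogous computation, I would present steps (i)--(iii) compactly and refer there for the routine verification of the coefficients.
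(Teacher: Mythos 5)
Your proposal is correct and follows essentially the same route as the paper: the paper also obtains \eqref{relation} by combining Corollary \ref{cor2.7} with Newton's identity expressing Chern classes in terms of Chern characters, the Grothendieck--Riemann--Roch computation of $\ch(Rr_*\mathcal{H}(n))$ set up just before the proposition (with $\beta$ entering through $\ch(q^*\mathcal{O}_Z)$), and it likewise defers the routine coefficient bookkeeping to \cite[Proposition 2.6]{PS}. The only point to watch is the sign bookkeeping: the factor $(-1)^{s-1}$ from Newton's identity and the $(-1)^s$ from dualizing $\mathbb{F}'$ are absorbed into the notation $\widetilde{c}_s(j)=(-1)^{s+1}c_s(j)$, so your Newton--Girard formula should be stated with these signs made explicit.
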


\begin{proof}
    The proof is essentially the same as \cite[Proposition 2.6]{PS}, except that we use the normalized Chern character $\mathrm{ch}^{\delta_0}({\mathbb{F}'}^\vee)=\mathrm{ch}({\mathbb{F}'}^\vee)\cdot \exp(-\delta_0)$. The convoluted formula results from applying Newton's identity to express Chern classes in the Chern characters, and the class $\beta$ comes from $\mathrm{ch}(q^* \mathcal{O}_Z)$. See also \cite[Proposition 2.7]{P=C} for an explicit version for $M_{4,1}$. 
\end{proof}

To obtain relations in $A^*(M_{d,\chi})$, we first take the identity (\ref{relation}) for some $\ell \geq d+1$; this gives a relation in $A^\ell(M_{d,\chi} \times \check{\mathbb{P}}^2)$. Then we multiply it by $\pi_R^* (\beta^j)$ and push forward to $M_{d,\chi}$, where $0\leq j \leq 2$. This produces a relation in $A^{\ell+j-2}(M_{d,\chi})$ among the tautological classes (\ref{tautclass}). The procedure is explained in detail in \cite[Section 2.3]{PS} and the paragraph before it. Indeed, as a key step in the proof of Theorem \ref{generation}(a), it is shown in \cite[Section 2.3]{PS} that the relations produced by Proposition \ref{prop2.8} express all the tautological classes (\ref{tautclass}) in terms of the $3d-7$ generators (\ref{taut}).

\subsection{Three relations in $A^{d}(M_{d,\chi})$.} \label{sec2.3}
The goal of this section is to prove the following result:

\begin{prop}\label{prop2.11}\label{prop: three relations are tautological.}
    For $d\geq 5$, the three linearly independent relations in $A^d(M_{d,\chi})$, cf. Theorem \ref{generation}(c), can be produced by Proposition \ref{prop2.8}.
\end{prop}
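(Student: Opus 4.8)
The plan is to show that the three-dimensional space of degree-$d$ relations, which we know exists abstractly by Theorem \ref{generation}(c), is actually spanned by relations coming out of Proposition \ref{prop2.8}. First I would identify the natural candidate relations produced by the mechanism described at the end of Section 2.2: for each $n \in \{1,2,3\}$ take the identity \eqref{relation} with $\ell = d+1$, obtaining a class $C_{d+1}^{(n)} = 0$ in $A^{d+1}(M_{d,\chi} \times \check{\mathbb{P}}^2)$, then multiply by $\pi_R^*(\beta^j)$ for $j \in \{0,1,2\}$ and push forward via $\pi_{M*}$. Since $A^*(\check{\mathbb{P}}^2)$ is spanned by $1, \beta, \beta^2$ with $\pi_{M*}\pi_R^*\beta^2 = 1$, the only pushforwards landing in $A^d(M_{d,\chi})$ (rather than in higher degree, which would be zero, or needing $\ell > d+1$) are those with $\ell + j - 2 = d$, i.e. $(\ell,j) = (d+1, 1)$ giving one relation per $n$, plus $(\ell, j) = (d, 2)$ — but $\ell = d$ is not allowed since we need $\ell \geq d+1$. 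Actually the cleanest bookkeeping is: $(\ell,j)=(d+1,1)$ contributes in degree $d$, and $(\ell,j) = (d+2, 0)$ also would, but let me instead take the three relations to be the $\pi_{M*}(\pi_R^*\beta \cdot C_{d+1}^{(n)})$ for $n=1,2,3$, and argue these three are linearly independent; any remaining degree-$d$ relations obtained from $\ell > d+1$ or other values of $j$ are then automatically in their span since the relation space is only three-dimensional.

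The core of the argument is therefore a linear independence statement: the three tautological relations $R^{(n)} \in A^d(M_{d,\chi})$, $n=1,2,3$, must be shown to be linearly independent (equivalently, to span, given Theorem \ref{generation}(c)). To do this I would expand $R^{(n)}$ in the monomial basis of $A^d(M_{d,\chi})$ built from the $3d-7$ generators \eqref{taut} — here the ordering of monomials set up in Section \ref{sec2.3} is exactly what is used — and isolate a small collection of ``marker'' monomials on which the $3 \times 3$ matrix of coefficients is easy to control. The key observation making this feasible uniformly in $d$ is that in \eqref{relation} the lowest-complexity terms (e.g. the term $\mathbf{m} = (0,\dots,0,1)$ with $m_{d+1} = 1$, i.e. $A_{d+1}$ itself, together with products like $A_1 A_d$, etc.) produce the top tautological classes $c_d(0), c_{d-1}(1), c_{d-2}(2)$ with explicit coefficients that are the polynomials in $\chi/d$ appearing in $A_s, B_s$. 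Restricting attention to the linear-in-generators part of each relation — the coefficients of $c_d(0)$, $c_{d-1}(1)$, $c_{d-2}(2)$, which are not themselves expressible in lower generators in degree $\leq d-1$ by Theorem \ref{generation}(b) — gives a $3\times 3$ matrix whose entries are the quadratic expressions $\left(3-n-\frac{\chi}{d}\right)$, $\frac{1}{2d^2}\left(\left(n-\frac72\right)d+\chi\right)\left(\left(n-\frac52\right)d+\chi\right)$, and their $B$-analogues, evaluated at $n=1,2,3$.

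Concretely I would write this matrix out (its first column is $(1,1,1)$ from the $\widetilde c_{s+1}(0)$ terms, up to sign), compute its determinant as a polynomial in $\chi$ and $d$, and check it is not identically zero; since it has degree bounded independently of $d$ in the variable $\chi$, it vanishes only for finitely many residues, and one checks directly it is nonzero for all $d \geq 5$ and $\chi$ coprime to $d$ (or, if it does vanish for some sporadic $(d,\chi)$, one brings in a fourth marker monomial — e.g. a coefficient of $c_d(0) c_0(2)$ or of a degree-$d$ monomial involving $\beta^2$-pushforward from $\ell=d+1$, $j=2$ combined with the $(\ell,j)=(d+2,0)$ relation — to break the degeneracy). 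Once the three relations are shown linearly independent, they span $A^d(M_{d,\chi})$'s relation space by the dimension count in Theorem \ref{generation}(c), proving the proposition.

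The main obstacle I anticipate is twofold: first, correctly extracting the coefficients of the marker classes from the combinatorially heavy sum \eqref{relation} after the multiply-by-$\beta^j$-and-pushforward step — one must be careful that pushing forward kills all terms except those with the right power of $\beta$, and that the GRR expansion does not contribute unexpected lower-order mixing into the coefficients of $c_d(0), c_{d-1}(1), c_{d-2}(2)$; and second, verifying the nonvanishing of the relevant $3\times 3$ (or $4\times 4$) determinant \emph{uniformly} in $d$ and $\chi$, which is where a short Mathematica-assisted computation — of the kind the authors reference — is the natural tool, rather than a fully hand-written verification.
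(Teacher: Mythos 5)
There is a genuine gap, and it lies in your very first reduction. The degree-$d$ relations produced by Proposition \ref{prop2.8} are \emph{not} relations among the $3d-7$ generators \eqref{taut}: the class $\pi_{M*}\bigl(\pi_R^*\beta\cdot C_{d+1}^{(n)}\bigr)$ contains, linearly, the degree-$d$ classes $c_{d+1}(0), c_d(1), c_{d-1}(2)$ (these come from the $B_d$-terms), and it contains the degree-$(d-1)$ classes $c_d(0), c_{d-1}(1), c_{d-2}(2)$ multiplied by $c_2(0), c_0(2)$. So these three relations live in $\mathbb{C}[T^+]^d$, where the kernel of $\mathbb{C}[T^+]^d \to A^d(M_{d,\chi})$ is $12$-dimensional, not $3$-dimensional; the three relations of Theorem \ref{generation}(c) are the elements of the $3$-dimensional subspace $\ker\bigl(\mathbb{C}[T]^d\to A^d\bigr)$. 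Consequently, proving that your three classes are linearly independent does not prove the proposition: their span is (at most) a $3$-dimensional subspace of a $12$-dimensional space, and in general it meets $\mathbb{C}[T]^d$ only in $0$, since a linear combination of just three relations cannot be expected to kill the nine ``extra'' monomials involving $T^{d-1}$ and $T^d$. Your justification that ``any remaining degree-$d$ relations are automatically in their span since the relation space is only three-dimensional'' conflates these two kernels. This is exactly why the paper produces \emph{twelve} relations — the six products $c_2(0)R_{(a)}^n,\, c_0(2)R_{(a)}^n$ of the degree-$(d-1)$ relations ($\ell=d+1$, direct pushforward) with the degree-one generators, the three $(\ell,j)=(d+1,1)$ relations, and the three $\ell=d+2$ relations — and proves their independence via two determinant computations (\texttt{det1}, \texttt{det2}) in a block-triangular arrangement; only then can one eliminate the nine extra monomials and recover the three relations in $\mathbb{C}[T]^d$ as tautological.

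A secondary, but related, error: your ``marker'' classes are off by one degree. The classes $c_d(0), c_{d-1}(1), c_{d-2}(2)$ have cohomological degree $d-1$, so they cannot appear linearly in a degree-$d$ relation; what appears linearly there is $c_{d+1}(0), c_d(1), c_{d-1}(2)$. Moreover Theorem \ref{generation}(b) does not say that $c_d(0), c_{d-1}(1), c_{d-2}(2)$ are inexpressible in lower generators — they are not among the generators \eqref{taut} at all, and by Theorem \ref{generation}(a) they \emph{are} so expressible; the content of the paper's \texttt{det1} computation is precisely that this expression is itself achieved by the tautological relations $R_{(a)}^n$. Your instinct to verify nonvanishing of explicit coefficient determinants in $(d,\chi)$ (with computer assistance) matches the paper's method, but the argument only closes once you set it up in $\mathbb{C}[T^+]^d$ with all twelve relations, not with the three $(\ell,j)=(d+1,1)$ relations alone.
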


We first introduce some notations for the rest of the paper. Let $T^k$ be the linear space spanned by tautological classes in degree $k$, that is
\[T^1=\textup{span}_\BC\{c_2(0), c_0(2)\}\,,\quad T^k=\textup{span}_\BC\{c_{k+1}(0),c_{k}(1), c_{k-1}(2)\}\,.\]

Let 
\[
T\coloneqq\bigoplus_{k=1}^{d-2} T^k
\]
be the span of the $3d-7$ tautological generators in (\ref{taut}), and let
\[
T^+ \coloneqq T \oplus T^{d-1}\oplus T^d.
\]
We denote by $\mathbb{C}[T]$ the symmetric algebra in $T$ (i.e. the free algebra generated by the $3d-7$ tautological generators), and by $\mathbb{C}[T]^k$ its graded piece of degree $k$. Similarly we write $\BC[T^+]$ for the symmetric algebra on $T^+$.

\smallskip

Furthermore, we endow the basis of $T^+$ with a total ordering $\prec$ as follows: first order the basis by cohomological degrees; within each degree, we order by the Chern grading of the basis. In a precise way, this means $c_k(j) \prec c_{k'}(j')$ if and only if $k+j-1<k'+j'-1$, or $k+j-1=k'+j'-1$ and $k<k'$. Finally, we endow monomials in each degree of $\mathbb{C}[T^+]$ the lexicographical order induced by the total ordering $\prec$ on $T^+$. In this way, we can talk about the leading term of a polynomial in $\mathbb{C}[T^+]$, as well as in $\mathbb{C}[T]$. 

\medskip
 We have graded algebra homomorphisms
\[\BC[T]\hookrightarrow \BC[T^+]\to A^\ast(M_{d,\chi})\,.\]
Theorem \ref{generation} states that the homomorphism $\BC[T]\to A^\ast(M_{d,\chi})$ is surjective, that it is an isomorphism up to degree $d-1$, and that 
$\ker\!\big(\BC[T]^d\to A^d(M_{d,\chi})\big)$
is 3-dimensional. A relation is by definition an element in the kernel of $\BC[T]\to A^\ast(M_{d,\chi})$ or $\BC[T^+]\to A^\ast(M_{d,\chi})$.

\medskip

With the above setup, Theorem \ref{generation} (b) and (c) are equivalent to the following\footnote{See \cite[Theorem 1.1]{YY5}, where the relevant Betti numbers $b_{2k}({\mathbb{P}^2}^{[n]})$ of the Hilbert scheme equal $\dim \mathbb{C}[T^+]^k$ by Göttsche's formula \cite{Go1}.}.

\begin{prop}
    For $d\geq 5$, we have
    \[
    \dim A^k(M_{d,\chi}) = \begin{cases}
        \dim \mathbb{C}[T^+]^k, \quad & k \leq d-2,\\
        \dim \mathbb{C}[T^+]^k-3, \quad &k = d-1,\\
        \dim \mathbb{C}[T^+]^k -12, \;\;
        &k=d.
    \end{cases}
    \]
\end{prop}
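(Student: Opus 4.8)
The plan is to deduce this proposition purely from Theorem~\ref{generation} together with the structure of the free algebra $\BC[T^+]$ and elementary dimension-counting of graded pieces. The key point is that $\BC[T^+]=\BC[T]\otimes_\BC \BC[T^{d-1}]\otimes_\BC\BC[T^d]$ as graded rings, where $T^{d-1},T^d$ each contribute three new generators in degrees $d-1$ and $d$ respectively, so we have the purely combinatorial identities
\[
\dim\BC[T^+]^{d-1}=\dim\BC[T]^{d-1}+3,\qquad
\dim\BC[T^+]^{d}=\dim\BC[T]^{d}+3\dim\BC[T]^{1}+3,
\]
the latter because a degree-$d$ monomial in $\BC[T^+]$ either lies in $\BC[T]^d$, or is one of the three degree-$d$ generators, or is a product of one of the three degree-$(d-1)$ generators with one of the $\dim T^1=\dim\BC[T]^1$ degree-one classes $c_2(0),c_0(2)$. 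Since $\dim\BC[T]^1=2$, this gives $\dim\BC[T^+]^d-\dim\BC[T]^d=3\cdot 2+3=9$. Hmm—so I should double-check against the claimed number $12$; the discrepancy of $3$ suggests that the intended statement measures $\dim\BC[T^+]^d-\dim A^d(M_{d,\chi})$ and uses the already-known three relations in degree $d-1$ together with the three genuinely new ones in degree $d$. I will return to this bookkeeping below.

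First I would record the three maps $\BC[T]\hookrightarrow\BC[T^+]\xrightarrow{\rho}A^*(M_{d,\chi})$ and observe that by Theorem~\ref{generation}(a) the composite $\BC[T]\to A^*$ is surjective in every degree, hence so is $\rho$. Next, for $k\le d-2$ Theorem~\ref{generation}(b) says $\BC[T]\to A^*$ is an isomorphism in these degrees; since $\BC[T]^k=\BC[T^+]^k$ for $k\le d-2$ (the new generators live in degrees $d-1,d$), this immediately gives $\dim A^k=\dim\BC[T^+]^k$ for $k\le d-2$. For $k=d-1$: by Theorem~\ref{generation}(b) there is no relation among the $3d-7$ generators in degree $d-1$, so $\BC[T]^{d-1}\to A^{d-1}$ is injective; but it need not be surjective since $A^{d-1}$ may contain the images of the three new generators of $T^{d-1}$. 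In fact $\rho$ restricted to $\BC[T^+]^{d-1}=\BC[T]^{d-1}\oplus T^{d-1}$ is surjective, and I claim its kernel is exactly $3$-dimensional: the subspace $\BC[T]^{d-1}$ injects, so $\dim A^{d-1}\ge\dim\BC[T]^{d-1}$, and the reverse inequality $\dim A^{d-1}\le\dim\BC[T]^{d-1}$ is precisely the content of Theorem~\ref{generation}(b) in the stronger form that the $3d-7$ generators already generate $A^{d-1}$ (which follows from step~(ii) of the proof recalled after the theorem: the relations of Proposition~\ref{prop2.8} express every tautological class in degree $\le d-1$ in terms of the $3d-7$ generators). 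Hence $\dim A^{d-1}=\dim\BC[T]^{d-1}=\dim\BC[T^+]^{d-1}-3$.

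For $k=d$ I would argue analogously but keep careful track of two sources of relations. The map $\rho\colon\BC[T^+]^d\to A^d$ is surjective. The subring generated by $T$ hits $A^d$ with the image of $\BC[T]^d$, whose kernel is $3$-dimensional by Theorem~\ref{generation}(c), so $\dim(\mathrm{im}\,\BC[T]^d)=\dim\BC[T]^d-3$. The remaining part of $\BC[T^+]^d$ is spanned by $T^d$ (three classes) and by $T^{d-1}\cdot T^1$ (which has dimension $3\cdot 2=6$ if those products are linearly independent in $\BC[T^+]$, which they are since $\BC[T^+]$ is free). Now the three degree-$(d-1)$ relations, multiplied by $c_2(0)$ and by $c_0(2)$, produce $6$ relations in $\BC[T^+]^d$ supported on $T^{d-1}\cdot T^1+\BC[T]^d$; combined with the $3$ new degree-$d$ relations from Theorem~\ref{generation}(c) and the $3$ relations already counted inside $\BC[T]^d$, one gets a total of $12$ linearly independent relations in $\BC[T^+]^d$, provided these $12$ are genuinely independent. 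So the main obstacle is exactly this independence check: one must verify that the six ``propagated'' relations $r_i\cdot c_2(0),\,r_i\cdot c_0(2)$ ($i=1,2,3$, where $r_i$ span the degree-$(d-1)$ kernel), together with the three degree-$d$ relations and the three relations already living in $\BC[T]^d$, span a $12$-dimensional space in $\ker(\BC[T^+]^d\to A^d)$ — equivalently that $\dim A^d=\dim\BC[T^+]^d-12$. I expect this to follow by the same ``no further relations'' input: Theorem~\ref{generation} guarantees $\dim\ker(\BC[T]^d\to A^d)=3$, and one checks that $\BC[T^+]^d=\BC[T]^d\oplus(T^{d-1}\cdot T^1\ominus\text{already in }\BC[T]^d)\oplus T^d$ decomposes so that $\rho$ has kernel $3+6+3=12$; the only thing requiring care is confirming that the map $T^{d-1}\to A^{d-1}$ is injective (true, by the degree $d-1$ case just proved, once we know the three new generators are not already in $\mathrm{im}\,\BC[T]^{d-1}$ — which is the assertion $\dim A^{d-1}>\dim\BC[T]^{d-1}$, part of Theorem~\ref{generation}) so that multiplication by the nonzero classes $c_2(0),c_0(2)$ does not collapse them unexpectedly. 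I would phrase the final count as: $\dim A^d=\dim\mathrm{im}(\BC[T^+]^d\to A^d)=\dim\BC[T^+]^d-\dim\ker=\dim\BC[T^+]^d-12$, citing \cite{YY5} and Göttsche's formula \cite{Go1} for the identification $\dim\BC[T^+]^k=b_{2k}(({\BP^2})^{[n]})$ that pins down the numbers $3$ and $12$ from the known Betti-number comparison.
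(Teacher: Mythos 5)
Your proposal is correct and is essentially the paper's own argument: the proposition is just Theorem \ref{generation}(a)--(c) combined with the free-algebra counts $\dim\BC[T^+]^k-\dim\BC[T]^k=0,\,3,\,9$ for $k\le d-2$, $k=d-1$, $k=d$, so surjectivity of $\BC[T]^k\to A^k(M_{d,\chi})$ together with the kernel dimensions $0$ and $3$ yields all three formulas (the paper records this equivalence via the footnote to Yuan and G\"ottsche, exactly as you do). One remark: your degree-$d$ detour through twelve explicit relations is superfluous---$\dim A^d=\dim\BC[T]^d-3=\dim\BC[T^+]^d-12$ already follows from what you established---and the parenthetical claim $\dim A^{d-1}>\dim\BC[T]^{d-1}$ is false (by (a) and (b) there is equality, the three degree-$(d-1)$ tautological classes being expressible in the image of $\BC[T]^{d-1}$), and the ``three new degree-$d$ relations'' come from generation, not from Theorem \ref{generation}(c); neither slip affects your final count.
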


Since $\dim \BC[T^+]^d=\dim \BC[T]^d+9$, to prove Proposition \ref{prop2.11} it suffices to produce 12 linearly independent relations in $\mathbb{C}[T^+]^d$ from Proposition \ref{prop2.8}. These 12 relations in degree $d$ are produced in the following three ways:
\begin{enumerate}
    \item[(a)] Take $\ell = d+1$ in \eqref{relation}, we get the vanishing of a cohomology class in $M_{d, \chi}\times \check{\BP}^2$. Pushing forward to $M_{d, \chi}$ yields a relation $R_{{(a)}}^n\in \BC[T^+]^{d-1}$ for $n=1,2,3$. Each of these 3 relations produces 2 relations in degree $d$, namely $c_2(0)R_{(a)}^n$ and $c_0(2)R_{(a)}^n$.
    \item[(b)] Take $\ell = d+1$ in (\ref{relation}), multiply by $\pi_R^* (\beta)$ and push forward to $M_{d, \chi}$. This produces 3 relations $R_{(b)}^n\in \BC[T^+]^{d}$ for $n=1,2,3$. 
    \item[(c)] Take $\ell = d+2$ in (\ref{relation}) and push forward to $M_{d, \chi}$. This produces 3 relations $R_{(c)}^n\in \BC[T^+]^{d}$ for $n=1,2,3$. 
\end{enumerate}
We now proceed to argue that these 12 relations are linearly independent. 
It can be checked, cf. \texttt{det1} in the Mathematica file and \cite[Section 2.3]{PS}, that under the assumption $\mathrm{gcd}(d,\chi)=1$, the $3\times 3$ matrix of coefficients of $c_{d}(0), c_{d-1}(1), c_{d-2}(2)$ in the three relations $R_{(a)}^1, R_{(a)}^2, R_{(a)}^3$ is \textit{nonsingular;} in fact, it has determinant
\[\texttt{det1}=
\frac{(-1)^{d}(d - 2)^4(d-1) }{4}\chi(d - \chi)(d-2\chi)\,.
\]
\smallskip
In particular, equations $R_{(a)}^n$ can be used to express $c_{d}(0), c_{d-1}(1), c_{d-2}(2)$ in terms of the tautological generators (\ref{taut}). Multiplying these equations with $c_2(0), c_0(2)$, we obtain 6 relations in $\mathbb{C}[T^+]^{d}$. These are still linearly independent since the $6\times 6$ matrix given by the coefficients of the monomials
\[\mathsf{Mon}_1=\{c_{d}(0)c_2(0), c_{d}(0)c_0(2), c_{d-1}(1)c_2(0), c_{d-1}(1)c_0(2), c_{d-2}(2)c_2(0), c_{d-2}(2)c_0(2)\}\,
\]
is invertible with determinant $\texttt{det1}^2$.

\smallskip

We consider now the 6 relations $R_{(b)}^n, R_{(c)}^n$, for $n=1,2,3$ and the set of monomials 
\[\mathsf{Mon}_2=\{c_{d+1}(0), c_{d}(1), c_{d-1}(2), c_{d-1}(0)c_3(0), c_{d-1}(0)c_2(1),c_{d-1}(0)c_1(2)\}\,.\]
The $6\times 6$ matrix of coefficients of $\mathsf{Mon}_2$ in these 6 relations is non-singular. Indeed, a direct computation (cf. \texttt{det2} in the Mathematica file) gives the determinant
\[
\texttt{det2}=4(d-2)^6(d-1)^3 d^4\neq 0\,.
\]
It follows that the 6 relations $R_{(b)}^n$ and $R_{(c)}^n$ with $n\in \{1,2,3\}$ are linearly independent.

\smallskip

Finally, we note that the monomials in $\mathsf{Mon}_2$ do not appear in the relations of the form $c_2(0)R_{(a)}^n, c_0(2)R_{(a)}^n$, so the $12\times 12$ matrix of coefficients of monomials $\mathsf{Mon}_1\cup \mathsf{Mon}_2$ in the 12 relations obtained by (a), (b), (c) is also nonsingular. We conclude that these 12 relations are all linearly independent, completing the proof of Proposition \ref{prop2.11}.

\medskip To explicitly obtain the 3 relations in $\BC[T]^d$, we proceed as follows. We use the relations $R_{(a)}^n$ to express $c_{d}(0), c_{d-1}(1), c_{d-2}(2)$ in terms of lower degree generators in the image of $\BC[T]$; we then use $R_{(b)}^1, R_{(c)}^1, R_{(c)}^2$ to write $c_{d+1}(0), c_{d}(1), c_{d-1}(2)$ in terms of the tautological generators. Finally, plugging these into the other three relations
\[
R_{{(b)}}^2,\;R_{{(b)}}^3,\; R_{{(c)}}^3
\]
yields the 3 relations in $\BC[T]^d$. By the non-vanishing of \texttt{det2}, the row-echelon form (with respect to the ordering $\prec$ of monomials) of this  system produces 3 relations $R_1, R_2, R_3$ such that
\begin{equation}\label{R123}
R_i=c_{d-1}(0)c_{4-i}(i-1)+\Big(\textup{sum of monomials}\prec c_{d-1}(0)c_1(2)\Big)\quad \!\! \in \BC[T]^d\,.
\end{equation}
A truncated version (see Remark \ref{rem: truncatedcalculation}) of these relations is implemented in the Mathematica file as \texttt{TruncRelations}.

\section{Truncated relations and proof of the main result}\label{sec: proof}

In this section we will prove the main result of this paper, Theorem \ref{main}. By the symmetry (a) in Section \ref{section0.1}, we may assume without loss of generality that $0<\chi,\, \chi'<d$. Note that when $d<5$ there is nothing to prove, so we assume as well that $d\geq 5$. Suppose that there is an isomorphism of graded $\BC$-algebras $\phi\colon A^\ast(M_{d, \chi})\to A^\ast(M_{d, \chi'})$. We first remark that $\phi$ can be uniquely lifted to the free algebra $\BC[T]$, i.e., there exists a map $\widetilde{\phi}$ fitting into the diagram
\begin{center}
    \begin{tikzcd}
\BC[T]\arrow[dashed,r, "\widetilde{\phi}"]\arrow[d,  twoheadrightarrow] &
\BC[T]\arrow[d,  twoheadrightarrow]\\
A^\ast(M_{d, \chi})\arrow[r, "\phi"]&
A^\ast(M_{d, \chi'})\,.
    \end{tikzcd}
\end{center}
Such unique lift exists since the cohomology is freely generated up to degree $d-2$, where the tautological generators (\ref{taut}) lie. More precisely, the lift is defined by
\[\widetilde{\phi}(c_k(j))\in A^{k+j-1}(M_{d,\chi'})\cong \BC[T]^{k+j-1}\quad\textup{for }k+j\leq d-1\,.\]

By abuse of notation, we write $\phi$ also for the lifted graded ring isomorphism. We will show that unless $\chi \equiv \pm \chi' \mod d$, there is no such lift respecting the three relations in degree $d$, i.e., that sends the 3-dimensional subspace
\[\ker\Big(\BC[T]^d\to A^d(M_{d, \chi})\Big)\subseteq \BC[T]^d \;\;  \textup{ to }\;\; \ker\Big(\BC[T]^d\to A^d(M_{d, \chi'})\Big)\,.\]

\begin{rmk}
A graded ring endomorophism of $\BC[T]$ is defined by an element of 
\[\prod_{k=1}^{d-2}\Hom\big(T^k, \BC[T]^k\big)\,.\]
For $d=5$, the space above has dimension $2\times 2+3\times 6+3\times 13=61$. On the other hand, a 3-dimensional subspace of $\BC[T]^d$ defines a point in the Grassmannian $\mathsf{Gr}\big(3, \BC[T]^d\big)$, which has dimension $3\times (45-3)=126$ for $d=5$. Thus we expect no graded ring endomorphism that takes a general point in $\mathsf{Gr}(3, \mathbb{C}[T]^d)$ to another.
\end{rmk}

Let $R_1, R_2, R_3\in \BC[T]^d$ be the three relations in \eqref{R123}; these span $\ker\Big(\BC[T]^d\to A^d(M_{d, \chi})\Big)$ and are such that the matrix $(R_1\; R_2\; R_3)^\mathsf{T}$ is in row echelon form with respect to the total ordering introduced in Section \ref{sec2.3} on the basis of $\BC[T]^d$. Similarly, we define $R_1', R_2', R_3'$ to be the relations in $\BC[T]^d$ which come from $M_{d,\chi'}$. Since $\phi:\BC[T]^d\xrightarrow{\sim} \BC[T]^d$ preserves the 3-dimensional kernels, it induces an invertible matrix 
$$S=(s_{ij})_{0\leq i,\,j\leq 3}$$ 
such that
\begin{equation}\label{eq: relationscorrespondence}
\phi(R_i)=\sum_{j=1}^3 s_{ij}R_j'\,.
\end{equation}

\subsubsection*{Step 1: truncating relations} We start by truncating the relations $R_i$ by looking only at the terms which are obtained as a product of a generator of degree $d-2$ with a generator of degree $2$. In other words, we consider the projection of $R_i$ to $T^2\otimes T^{d-2}\subseteq \BC[T]^d$ and regard it as a $3\times 3$ matrix $M_i$ by identifying
\[M_i\in T^2\otimes T^{d-2}\cong \Hom((T^{d-2})^\vee, T^2)\cong M_{3\times 3}\,.\]
This is implemented in the Mathematica file as \texttt{TopRelations[d,chi]}. The identification with a $3\times 3$ matrix uses the ordered basis for $T^2, T^{d-2}$ given by the generators from Section~\ref{sec2.3}.

\smallskip

More concretely, 
\[M_i=\Big([c_{3-s}(s)c_{d-1-t}(t)]R_i\Big)_{0\leq s,\,t\leq 2}\]
where $[c_{3-s}(s)c_{d-1-t}(t)]$ means reading off the corresponding coefficient with respect to the monomial basis. The matrices $M_1, M_2, M_3$ can be explicitly calculated as follows:

\begin{align*}
M_1&=\begin{bmatrix}1& 0& 0\\
0& 0& \frac{(d - 2) (d-2\chi) \chi (d - \chi)}{8 d^3}\\
\frac{d - 4}{8 d - 
      16}& \frac{(d-2\chi) \chi (\chi-d)}{2 (d - 2) d^3}& \frac{(-6 \chi - 1) d^5 + (6 \chi^2 + 6 \chi + 
         3) d^4 + 
      18 \chi^2 d^3 - (48 \chi^3 + 48 \chi^2) d^2 + (24 \chi^4 + 
         96 \chi^3) d - 48 \chi^4}{32 (d - 2) d^4}\end{bmatrix},\\
         M_2&=\begin{bmatrix}0& 1& 0\\
1& 0& \frac{(-6 \chi - 1) d^2 + (6 \chi^2 + 12 \chi) d - 12 \chi^2}{8 d^2}\\
0&\frac{24 \chi^2-24 \chi d-d^3+2 d^2}{8 (d-2) d^2}&\frac{\chi \left(d^2+8 d-16\right) (2 \chi-d) (d-\chi)}{8 (d-2) d^3}\end{bmatrix}, \quad \: M_3=\begin{bmatrix}0& 0& 1\\
0& 0& 0\\
\frac{2}{d-2}&0&\frac{12 \chi^2 - 12 \chi d - d^2}{8 (d-2) d}\end{bmatrix}.\\
\end{align*}

\vspace{-16pt}
   
\begin{rmk}\label{rem: truncatedcalculation}
To compute these matrices, and also the extended matrices that will appear later in Section \ref{sec: type1extended}, we follow the strategy described in the previous section. Note that the truncation of the relations $R_{(a)}^n, R_{(b)}^n, R_{(c)}^n$ can be computed since only finitely many terms of \eqref{relation} contribute. For instance, for $\ell=d+1$ the only contributing terms are the ones corresponding to the partitions
\[(d + 1),\, (d, 1),\, (d - 1, 1, 1),\, (d - 1, 2),\, (d - 2, 2, 1),\, (d - 
  2, 3),\, (d - 2, 1, 1, 1)\,.\]
   \end{rmk}
   
Similarly, we define $M_i'$ with $\chi$ replaced by $\chi'$. 
Let $A, B$ be the invertible linear maps
\begin{align*}A&\colon T^{d-2}\hookrightarrow \BC[T]^{d-2}\xlongrightarrow{\phi} \BC[T]^{d-2}\twoheadrightarrow T^{d-2}\,,\\
B&\colon T^{2}\hookrightarrow \BC[T]^{2}\xlongrightarrow{\phi} \BC[T]^{2}\twoheadrightarrow T^{2}\,.
\end{align*}
They are implemented in the Mathematica file as \texttt{AutA} and \texttt{AutB}, respectively.
By considering the standard basis for $T^2, T^{d-2}$ we identify again $A, B$ with $3\times 3$ matrices. More concretely, $A=(a_{st})_{0\leq s,\,t\leq 2}, B=(b_{st})_{0\leq s,\,t\leq 2}$ where 
\begin{align*}
    \phi\big(c_{d-1-s}(s)\big)&=\sum_{t=0}^2 a_{st}c_{d-1-t}(t)+ \textnormal{(lower terms in }\mathbb{C}[T]^{d-2})\,,\\
    \phi\big(c_{3-s}(s)\big)&=\sum_{t=0}^2 b_{st}c_{3-t}(t)+ \textnormal{(lower terms in }\mathbb{C}[T]^{2})\,.
\end{align*}

Then the truncation of \eqref{eq: relationscorrespondence} to $T^2\otimes T^{d-2}$ can be written as an equality between $3\times 3$ matrices
\begin{equation}\label{eq: truncatedcij}
    A^\mathsf{T} M_i B=\sum_{j=1}^3 s_{ij} M_j'\,,\quad i=1,2,3\,.
\end{equation}

\subsubsection*{Step 2: finding $s_{ij}$} We begin with solving $s_{ij}$ for which there are matrices $A, B$ satisfying \eqref{eq: truncatedcij}. By scaling both $A, B$ (hence also $S$) we may assume that $\det(A)=\det(B)=1$. Then taking a linear combination of \eqref{eq: truncatedcij} and applying the determinant it follows that
\begin{equation}\label{eq: det}
    \det\left(\sum_{i=1}^3x_i M_i\right)=\det\left(\sum_{i,j=1}^3 x_i s_{ij} M_j'\right)\,,\end{equation}
 as an equality between two homogeneous cubic polynomials in $x_1, x_2, x_3$. We let $E\subseteq \BP^2$ be the cubic curve defined by
 \[E=\left\{[x_1: x_2:x_3]\colon \det\left(\sum_{i=1}^3x_i M_i\right)\right\}\subseteq \BP^2_{x_1, x_2, x_3}\,.\]
 Similarly define $E'\subseteq \BP^2$. The matrix $S$ defines an automorphism of $\BP^2_{x_1, x_2, x_3}$ sending $E$ to $E'$.

 \begin{lem}
The cubic curves $E, E'$ are elliptic nodal curves with a single node at $[0:0:1]$. Hence $s_{31}=s_{32}=0$. 
 \end{lem}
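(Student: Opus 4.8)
The plan is to compute the cubic $\det(x_1 M_1 + x_2 M_2 + x_3 M_3)$ explicitly from the displayed matrices $M_1, M_2, M_3$ and verify directly that it has the claimed properties. Concretely, I would form the $3\times 3$ matrix $\sum x_i M_i$, take its determinant (this is a routine cofactor expansion, best done in the Mathematica file alongside the other determinant computations), and collect terms. The key structural observations to pull out are: (i) the coefficient of $x_3^3$ vanishes, so $[0:0:1]\in E$; (ii) the coefficients of $x_1 x_3^2$ and $x_2 x_3^2$ also vanish, so $[0:0:1]$ is a singular point of $E$; and (iii) the quadratic form in $(x_1, x_2)$ obtained as the degree-$2$-in-$x_3$ part, i.e. the Hessian-type term cutting out the tangent cone at $[0:0:1]$, is a nondegenerate binary quadratic (two distinct roots), so the singularity is a node rather than a cusp. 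Finally one checks the cubic is irreducible — equivalently that $E$ is not a union of a conic and a line or three lines — so that $E$ is an irreducible nodal cubic, hence arithmetic genus $1$, i.e. an ``elliptic nodal curve'' with its unique node at $[0:0:1]$. The same computation with $\chi'$ in place of $\chi$ gives the statement for $E'$.

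From the node-location statement the rest is quick. The projective automorphism of $\BP^2_{x_1,x_2,x_3}$ induced by $S$ carries $E$ isomorphically onto $E'$; since an isomorphism of curves must send the node of $E$ to the node of $E'$, and both nodes sit at $[0:0:1]$, the automorphism $S$ fixes the point $[0:0:1]$. An element of $\mathrm{PGL}_3$ written as a matrix $S=(s_{ij})$ fixes $[0:0:1]$ exactly when the third column is proportional to $(0,0,1)^{\mathsf T}$, i.e. $s_{13}=s_{23}=0$ (here one must be careful about the index conventions: with the pairing $\phi(R_i)=\sum_j s_{ij}R_j'$ and the induced action $x \mapsto S^{\mathsf T} x$ or $x\mapsto Sx$ on coordinates as set up just before the lemma, fixing $[0:0:1]$ translates into the vanishing of precisely these two entries). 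This gives the desired conclusion $s_{13}=s_{23}=0$.

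The only genuine subtlety — and the step I would be most careful about — is matching conventions: one must confirm that ``$S$ sends $E$ to $E'$'' is the correct direction (rather than $E'$ to $E$), and that the entries forced to vanish are $s_{13}, s_{23}$ and not, say, $s_{31}, s_{32}$. This is entirely bookkeeping, traceable through equation \eqref{eq: det} and the definition of $E, E'$: since $\det(\sum_i x_i M_i) = \det(\sum_{i,j} x_i s_{ij} M_j')$, a point $[x_1:x_2:x_3]\in E$ maps under the substitution $y_j = \sum_i s_{ij} x_i$ to a point $[y_1:y_2:y_3]\in E'$, so the linear map $x\mapsto S^{\mathsf T}x$ sends $E$ to $E'$ and in particular sends the node $[0:0:1]$ of $E$ to the node $[0:0:1]$ of $E'$; writing out $S^{\mathsf T}(0,0,1)^{\mathsf T}=(s_{31},s_{32},s_{33})^{\mathsf T}\sim(0,0,1)^{\mathsf T}$ would instead force $s_{31}=s_{32}=0$, so one should double-check against the paper's stated conclusion and, if needed, use the inverse map — in any case the honest content is ``$S$ fixes the node,'' and the precise pair of vanishing entries is then read off mechanically. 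The computation that $E$ is an irreducible nodal cubic is the main ``real'' input, but given the explicit matrices it is a finite check, naturally folded into the existing Mathematica verification (one can, e.g., exhibit the two distinct tangent directions at the node and confirm irreducibility by checking the cubic has no linear factor).
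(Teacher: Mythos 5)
Your proposal is correct and takes essentially the same route as the paper: compute the cubic $\det\left(\sum_i x_i M_i\right)$ explicitly, observe that in the chart $x_3=1$ its lowest-order term is a nonzero multiple of $x_1x_2$ (so $[0:0:1]$ is a node with tangent cone $x_1x_2=0$), rule out a line component through the node — the paper does this by noting such a line would have to be $x_1=0$ or $x_2=0$, contradicting $\det(M_1)\neq 0$, $\det(M_2)\neq 0$ — and conclude that the automorphism induced by $S$ fixes the common node. Your bookkeeping caveat is in fact on target: the paper's own proof (and its subsequent use in the Type I/II analysis) concludes $s_{31}=s_{32}=0$ via exactly the substitution $y=S^{\mathsf T}x$ you describe, so the lemma statement's ``$s_{13}=s_{23}=0$'' is an index transposition.
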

 \begin{proof}
We can directly compute the equation defining $E$. In the chart \[\BC^2_{x_1, x_2}=\{[x_1: x_2: 1]\}\subseteq \BP^2_{x_1, x_2, x_3}\] the cubic polynomial defining $E$ has the form
\[-\frac{\chi(d-\chi)(d-2\chi)}{4(d-2)d^2}x_1x_2+\textnormal{(cubic terms in }x_1, x_2).\]
It follows that $[0:0:1]$ is indeed a node and the two branches of $E$ at $[0:0:1]$ are tangent to the lines $x_1=0$ and $x_2=0$. A cubic with a node either has exactly one node or has two nodes and is the union of a line and a conic. If the latter were the case, the line would necessarily be $x_1=0$ or $x_2=0$, but this is not possible since $\det(M_2)\neq 0$ and $\det(M_1)\neq 0$. 

Finally, we note that an automorphism of $\BP^2_{x_1, x_2, x_3}$ sending $E$ to $E'$ must preserve the common node $[0:0:1]$, so it follows that $s_{31}=s_{32}=0$.
\end{proof}

We now use equation \eqref{eq: det} to determine the remaining entries of $S$. Given a triple $(u,v,w)$ of non-negative integers such that $u+v+w=3$ we get an equation among the entries of $S$ by comparing the $x_1^ux_2^vx_3^w$ coefficient of both sides of \eqref{eq: det}; this is implemented in the Mathematica file as \texttt{coeff[u,v,w]}{\footnote{In the code, we use \texttt{chi1} and \texttt{chi2} instead of $\chi$ and $\chi'$.}}. The triples $(0,2,1)$ and $(2,0,1)$ give, respectively, 
\[s_{21}s_{22}s_{33}=0\;\;\textup{ and }\;\;s_{12}s_{11}s_{33}=0\,.\]
On the other hand, the equation for $(1,1,1)$ implies that $s_{33}\neq 0$ and $s_{12}s_{21}+s_{11}s_{22}\neq 0$. It follows that solutions must have either:
\begin{enumerate}
    \item[I.] $s_{11}=s_{22}=0$. We call the solutions of this form \textit{Type I} solutions.
    \item[II.] $s_{12}=s_{21}=0$. We call the solutions of this form \textit{Type II} solutions.
\end{enumerate}
From now on we divide the analysis of \eqref{eq: truncatedcij} and \eqref{eq: det} according to the type of the solutions.

\subsection{Type I}
\label{sec: type1}

 Suppose that $S$ is a solution of \textit{Type I} to \eqref{eq: det}, so that we have already the vanishing of the entries $s_{31}=s_{32}=s_{11}=s_{22}=0$. We now obtain the remaining entries. By looking at the $(0,3,0)$ and $(3,0,0)$ equations we obtain $s_{21}, s_{12}$ up to a choice of a cubic root of unity:
\begin{align*}
    s_{21}^3&=-d^3\frac{2 \chi (d-\chi) (d-2\chi) (d^2+8 d -16)}{
 \chi'^2 (d-\chi')^2 (d-2\chi')^2 (d-2)}\,,\\
 s_{12}^3&=-\frac{1}{d^3}\frac{
 \chi^2 (d-\chi)^2 (d-2\chi)^2 (d-2)}{2 \chi' (d-\chi') (d-2\chi') (d^2+8 d -16)}\,.
\end{align*}
Equation $(1,1,1)$ then writes $s_{33}$ in terms of $s_{12}$ and $s_{21}$ and we can conclude that $s_{33}^3=1$. By simultaneously scaling the matrices $B$ and $S$, if necessary, by a cubic root of unity\footnote{Note that we already scaled $B$ so that we could assume that $\det(B)=1$; since scaling by a cubic root of unity does not affect the determinant, we are allowed to do so.} we may assume without loss of generality that $s_{33}=1$. Finally, we obtain $s_{23}$ using $(1,2,0)$ and $s_{13}$ using $(2,1,0)$; note that these are uniquely determined once we choose which cubic root we take for $s_{21}$.


After having obtained $S$, the next step now is to solve equation \eqref{eq: truncatedcij} for the entries of $A$ and $B$. The equation is quadratic in the entries of $A$ and $B$; however, by rewriting it as 
\begin{equation}\label{eq: truncatedinverted}
    A^\mathsf{T} M_i =\left(\sum_{j=1}^3 s_{ij} M_j'\right) B^{-1}\,,\quad i=1,2,3\,,
\end{equation}
 it becomes a linear (homogeneous) system in the entries $a_{st}$ of $A$ and in the entries $\tilde b_{st}$ (written as \texttt{bb[s,t]} in the Mathematica file) of $B^{-1}$. It turns out that the linear system only admits the trivial solution $a_{st}=\tilde b_{st}=0$.
 Indeed, we can express all $a_{st}, \tilde{b}_{st}$ as a multiple of $a_{33}$, and one additional constraint, denoted by \texttt{Diff[d,chi1,chi2][\![1]\!][\![1,2]\!]} in the Mathematica file, imposes that
 \[
 -\frac{2 \chi (d-4) (d-\chi)(d-2\chi) a_{33}}{d ((d-2) d^2+ 24  d{\chi'}-24 {\chi'}^2)}=0,
 \]
 whence $a_{33}=0$. Note that the demoninator is always positive, thus nonzero. This gives a contradiction since $A$ and $B$ are invertible matrices; thus there are no solutions to \eqref{eq: relationscorrespondence} with $S$ of \textit{Type I}. 


\subsection{Type II}
Suppose that $S$ is a solution of \textit{Type II} to \eqref{eq: det}. We already have the vanishing of the entries $s_{31}=s_{32}=s_{12}=s_{21}=0$. The remaining entries are obtained exactly as in the \textit{Type I} case. The equations $(3,0,0)$ and $(0,3,0)$ determine, respectively, $s_{11}$ and $s_{22}$ up to a choice of a cubic root:
\[s_{11}^3=\left(\frac{\chi(d-\chi)(d-2\chi)}{\chi'(d-\chi')(d-2\chi')}\right)^2\,,\quad s_{22}^3=\frac{\chi(d-\chi)(d-2\chi)}{\chi'(d-\chi')(d-2\chi')}\,.\]
Equation $(1,1,1)$ then writes $s_{33}$ in terms of $s_{11}$ and $s_{22}$ and we can conclude that $s_{33}^3=1$. As we did for \textit{Type I}, we may assume without loss of generality that $s_{33}=1$. This fixes $s_{11}=s_{22}^2$. Finally, we obtain $s_{13}$ using $(2,1,0)$ and $s_{23}$ using $(1,2,0)$; note that these are uniquely determined once we choose which cubic root we take for $s_{22}$. 

\smallskip

After obtaining $S$, equation \eqref{eq: truncatedinverted} is again linear in the entries $a_{st}$ of $A$ and in the entries $\tilde b_{st}$ of $B^{-1}$. The linear system can be solved explicitly and it turns out that it has a one-dimensional space of solutions. By writing all the variables in terms of $a_{11}$, the normalization imposed on the determinant $\det(A)=1$ gives
 \[a_{11}^3=\frac{\chi(d-\chi)(d-2\chi)}{\chi'(d-\chi')(d-2\chi')}=s_{22}^3\,.\]
By further scaling $A$ by a cubic root of unity and scaling $B$ by its inverse hence leaving $S$ unchanged, we may assume without loss of generality that $a_{11}=s_{22}$. This completely solves all the matrices $S, A, B$ once we choose which cubic root we take for $s_{11}$. Solutions are implemented in the Mathematica file as \texttt{solS[d,chi1,chi2], solA[d,chi1,chi2], solB[d,chi1,chi2]}, respectively.

\begin{rmk}
When $\chi=\chi'$, the unique solution of \textit{Type II} is given by $S=A=B=\textup{Id}_{3\times 3}$, as expected. When $\chi+\chi'=d$, the solution is given by simple sign matrices
$$A=B=\begin{bmatrix}-1& 0& 0\\
0& 1&0\\
0&0&-1\end{bmatrix},\quad 
S=\begin{bmatrix}1& 0& 0\\
0& -1& 0\\
0&0&1\end{bmatrix}.
$$
In either case, these truncated solutions extend to a solution of \eqref{eq: relationscorrespondence} since the moduli spaces $M_{d, \chi}$ and $M_{d, \chi'}$ are isomorphic. Indeed, the solution matrices are compatible with Proposition \ref{prop2.2}(c) which describes how the tautological generators are mapped under the two types of symmetries.
\end{rmk}


 \subsection{Type II -- extended matrices}
 \label{sec: type1extended}
Recall that we have used only a part of the full equation \eqref{eq: relationscorrespondence} truncated via the projection $\BC[T]^d\twoheadrightarrow T^2\otimes T^{d-2}$. This was sufficient to determine $A, B$, and $S$. 
\subsubsection*{Step 3: bigger truncation}
We proceed by considering a bigger truncation via the projection 
$$\BC[T]^d\twoheadrightarrow \Big(T^2\oplus \Sym^2(T^1)\Big)\otimes T^{d-2}.
$$
We can regard the truncated relations as a block $3\times (3\mid3)$ matrix by identifying 
\[\begin{bmatrix}M_i&N_i\end{bmatrix}
\in \Big(T^2\oplus \Sym^2(T^1)\Big)\otimes T^{d-2}\cong \Hom((T^{d-2})^\vee, T^2\oplus \Sym^2(T^1))\cong M_{3\times 6}\,,\]
where $M_i$ is as before and $N_i$ is implemented in the Mathematica file as \texttt{ExtRelations[d,chi]}. Similarly, we define $\begin{bmatrix}M_i'&N_i'\end{bmatrix}$ with $\chi$ replaced by $\chi'$. As before, $\phi$ induces a linear map
$$\begin{bmatrix}B& U\\
0& V\\\end{bmatrix}\colon T^2\oplus \Sym^2(T^1)=\BC[T]^{2}\xlongrightarrow{\phi} \BC[T]^{2}= T^2\oplus \Sym^2(T^1)\,,
$$
where $U:T^2\rightarrow \Sym^2(T^1)$ and $V:\Sym^2(T^1)\rightarrow \Sym^2(T^1)$. They are implemented in the Mathematica file as \texttt{AutU} and \texttt{AutV}, respectively. Note that the lower left block is zero because $\phi$ is a graded ring isomorphism. 
Then the truncation of \eqref{eq: relationscorrespondence} to $\Big(T^2\oplus \Sym^2(T^1)\Big)\otimes T^{d-2}$ can be written as an equality between $3\times 6$ matrices
\begin{equation*}
    A^\mathsf{T} \begin{bmatrix}M_i&N_i\end{bmatrix} \begin{bmatrix}B& U\\
0& V\\\end{bmatrix}=\sum_{j=1}^3 s_{ij} \begin{bmatrix}M_j'&N_j'\end{bmatrix}\,,\quad i=1,2,3\,.
\end{equation*}
Since $A, B$, and $S$ satisfy \eqref{eq: truncatedcij}, this reduces to 
\begin{equation}\label{eq: reduction}
    A^\mathsf{T}M_iU+A^\mathsf{T}N_iV=\sum_{j=1}^3 s_{ij} N_j'\,,\quad i=1,2,3\,,
\end{equation}
which is a linear system in the entries $u_{st}$ and $v_{st}$ of $U$ and $V$, respectively. 

We show that this linear system, referred to as \texttt{ExtDiff[d,chi1,chi2]} in the Mathematica file, has a solution only if $\chi=\chi'$ or $\chi+\chi'=d$. By using certain parts of the linear system \eqref{eq: reduction}, as explained in the Mathematica file, we can express $u_{11}, u_{21}, u_{31}, v_{11}, v_{21}$ in terms of $v_{31}$. Once this is done, we further use two of the remaining linear system \eqref{eq: reduction}, referred to as \texttt{ExtDiff[d,chi1,chi2][\![1]\!][\![1,1]\!]} and \texttt{ExtDiff[d,chi1,chi2][\![1]\!][\![2,1]\!]}, which take the form 
\begin{align*}
    \texttt{AA[d,chi1,chi2]}v_{31}+\texttt{BB[d,chi1,chi2]}&=0\,,\\
    \texttt{CC[d,chi1,chi2]}v_{31}+\texttt{DD[d,chi1,chi2]}&=0\,,
\end{align*}
following the notations from the code. Existence of the solution $v_{31}$ implies an equation which is purely in terms of $d$, $\chi$ and $\chi'$:
\begin{multline*}
\texttt{Constraint[d,chi1,chi2]}\\ \coloneqq\texttt{AA[d,chi1,chi2]}\cdot \texttt{DD[d,chi1,chi2]}-\texttt{BB[d,chi1,chi2]}\cdot \texttt{CC[d,chi1,chi2]}=0.
\end{multline*}
This expression admits a factorizaton
$$\texttt{Constraint[d,chi1,chi2]}=\texttt{P1[d,chi1]}\cdot \texttt{P2[d,chi1,chi2]}\cdot\texttt{(other terms)}
$$
with $\texttt{(other terms)}$ being clearly nonzero, see the Mathematica file. Note that the roles of $\chi$ and $\chi'$ are symmetric, we conclude that 
\begin{equation}\label{eq: final constraint}
    P_1(d,\chi)\cdot P_2(d,\chi,\chi')=0\quad \textnormal{and}\quad P_1(d,\chi')\cdot P_2(d,\chi',\chi)=0.
\end{equation}
We are left to prove that \eqref{eq: final constraint} implies $\chi=\chi'$ or $\chi+\chi'=d$. Suppose for the contradiction that $\chi\neq \chi'$ and $\chi+\chi'\neq d$. 

\smallskip

We first show that $P_2(d,\chi,\chi')$ and $P_2(d,\chi',\chi)$ are nonzero. From the formula in the file, it is straightforward to check that $P_2(d,\chi,\chi')=0$ if and only if $P_2(d,\chi',\chi)=0$, if and only if 
\begin{equation}
\label{constraint}
d^2 (\chi-\chi') (\chi+\chi'-d)\left(216\chi^4{\chi'}^4-432\chi^3{\chi'}^3(\chi+\chi')d+d^2\cdot f(d,\chi,\chi')\right)=0
\end{equation}
for some explicit integral polynomial $f(d,\chi,\chi')$. Since we assumed that $\chi\neq \chi'$ and $\chi+\chi'\neq d$, the last term must vanish. On the other hand, $\gcd(d,\chi)=\gcd(d,\chi')=1$ implies that $d$ must divide $216$. This further implies that $d^2$ divides $216$ by looking at the linear term in $d$, so we are left with $d=1,2,3,6$. But there are no non-trivial pairs $\chi$ and $\chi'$ for such $d$.

\smallskip
Therefore, we may assume $P_1(d,\chi)=P_1(d,\chi')=0$. We can check that $P_1(d,x)$ is a degree four polynomial in $x$, with the symmetry 
$$P_1(d,x)=P_1(d,d-x).
$$
This implies that $P_1(d,x)$ is a degree four polynomial with four distinct roots 
$$x=\chi,\; \chi',\; d-\chi,\; d-\chi',$$ 
all within the interval $[1,d-1]$. 
On the other hand, explicit computation shows that 
$$P_1(d,0)>0,\quad P_1(d,1)<0$$
as long as $d\geq 5$. This forces $P_1(d,x)$ to have an additional root in the interval $(0,1)$, which contradicts that it is a degree four polynomial. It follows that \eqref{eq: final constraint} implies $\chi=\chi'$ or $\chi+\chi'=d$, hence completing the proof.

\end{document}